\newtheorem{sats}{Theorem}[section]
\newtheorem{deef}[sats]{Definition}
\newtheorem{lem}[sats]{Lemma}
\newtheorem{prop}[sats]{Proposition}
\newcommand{\R}{\mathbbm{R}}
\newcommand{\C}{\mathbbm{C}}
\newcommand{\Z}{\mathbbm{Z}}
\newcommand{\ellL}{\mathcal{L}}
\newcommand{\id}{ \mathrm{id}}
\newcommand{\Ko}{\mathcal{K}}
\newcommand{\Bo}{\mathcal{B}}
\newcommand{\T}{\mathbbm{T}}
\newcommand{\Fg}{\mathcal{F}}
\renewcommand{\epsilon}{\varepsilon}
\renewcommand{\phi}{\varphi}
\newcommand{\im}{\mathrm{i} \mathrm{m} \,}
\title{
A remark on twists and the notion of torsion-free discrete quantum groups}
\author{Magnus Goffeng}
\date{Department of Mathematical Sciences, Division of Mathematics\\
Chalmers university of Technology and University of Gothenburg}
\begin{document}
\maketitle

\begin{abstract}
In this paper twists of reduced locally compact quantum groups are studied. Twists of the dual coaction on a reduced crossed product are introduced and the twisted dual coactions are proved to satisfy a type of Takesaki-Takai duality. The twisted Takesaki-Takai duality implies that twists of discrete, torsion-free quantum groups are torsion-free. Cocycle twists of duals of semisimple, compact Lie are studied leading to a locally compact quantum group contained in the Drinfeld-Jimbo algebra which gives a dual notion of Woronowicz deformations for semisimple, compact Lie groups. These cocycle twists are proven to be torsion-free whenever the Lie group is simply connected.
\end{abstract}

\section*{Introduction}

Quantum groups have long been studied as a natural generalization of groups. On the algebraic level they became interresting due to their applications in condensed matter theory and their relation with the algebraic Bethe ansatz. To the operator algebraists, the algebraic formalism of quantum groups lead to Kac algebras which gave a good setting to generalize Pontryagin duals to non-abelian groups, see more in \cite{enva}. In \cite{woro}, Woronowicz introduced a compact, non-commutative and non-cocommutative bi-$C^*$-algebra $SU_q(2)$, containing many of the group like structures that were found in Kac algebras, except that $SU_q(2)$ did not admit a tracial Haar weight. The discovery of these, more esoteric, quantum groups leads to the notion of multiplicative unitaries in \cite{baskauni}. A seemingly more intrinsic, but actually more restrictive, notion of reduced quantum groups was introduced in \cite{vaku}. The reduced quantum groups are the locally compact quantum groups admitting faithful Haar weights. The reduced setting even allows for Pontryagin duality. 

In the case of Lie groups two different approaches, the algebraic versus the operator algebraic, were put in a context in \cite{rosso} where the $C^*$-algebraic deformed Lie groups of Woronowicz were shown to be algebraically dual to the Drinfeld-Jimbo algebras which are cocycle twists of universal enveloping Lie algebras. In the literature the compact setting is usually favorized due to the existence of Haar states on compact quantum groups. In \cite{deco} twists of von Neumann quantum groups were studied and it was shown that a twisted quantum group is also a von Neumann algebraic quantum group in the sense of \cite{vakutv}. 

The Baum-Connes property of  a group is an interesting property with deep implications in representation theory. The study of the Baum-Connes conjecture as a homological property of the equivariant $KK^G$-category in \cite{neme} allows for a generalization of the Baum-Connes conjecture beyond groups to certain quantum groups. One generalization was introduced in \cite{mehom} for discrete quantum groups with a property generalizing torsion-freeness of discrete groups. These two conditions were needed to simplify the homological algebra. In the paper \cite{mehom} the question was posed whether the discrete duals of the Woronowicz deformations are torsion-free in this sense? The question is motivated by the fact that duals of compact groups satisfying the Hodgkin condition, such as $SU(n)$, are torsion-free in the sense of Meyer. \\

This paper deals with twists and cocycle twists of reduced locally compact quantum groups. In particular, we will study how a twist affects torsion-free discrete quantum groups. The main theme is the following property of a cocycle twist; it does not give rise to torsion in a discrete quantum group. We prove this property for twists and for the cocycle twist that defines the quantum group lying inside the Drinfeld-Jimbo algebra of a simply connected Lie group.

We will in the first section of this paper give some preliminaries on locally compact quantum groups, mainly from the paper \cite{vaku}. In the second section we recall the notion of a twist and cocycle twist for reduced locally compact quantum groups (see Definition \ref{twistdef}). In the von Neumann setting, twists were studied in \cite{deco}. We recall the result from \cite{deco} that twisting von Neumann algebraic quantum groups produces von Neumann algebraic quantum groups. Under the condition that our twists are \emph{feasible}, see Definition \ref{feasi}, we can twist the dual action on a crossed product to a coaction of the twisted quantum group, see Proposition \ref{contcoatw}. The feasibility condition is always satisfied by twists of discrete quantum groups. In Theorem \ref{tata} we show that, under a regularity condition on the twist, the twisted coaction satisfy a type of Takesaki-Takai duality in the sense that the crossed product of a twisted coaction is Morita equivalent to the crossed product before twisting.

In the third section we will study the twisted Takesaki-Takai duality in a more precise way. In Lemma \ref{twotimmor} we show that the Takesaki-Takai duality gives back the same $C^*$-algebra one started with, except for a new coaction of the twisted quantum group. This result is used to prove that if a discrete quantum group is torsion-free, then so are all the twists. However, the question posed in \cite{mehom} whether the duals of the Woronowicz deformations are torsion-free cannot be answered by these methods since the Drinfeld-Jimbo twist is not a twist. 

In the final section we study the concept of a Drinfeld-Jimbo algebra and the locally compact quantum group lying inside it using the ideas of \cite{netu} to describe the Drinfeld-Jimbo algebra as a cocycle twist of the dual of a semisimple, compact Lie group. By \cite{rosso}, the simply-connected case produces the dual of Woronowicz's deformed Lie group. The idea here lies in using weight theory for the Lie group to embed the dual into the dual of the simple connected covering and the twisting is induced from that on the simple connected covering. For the simply connected Lie groups, we show that the locally compact quantum group lying inside the Drinfeld-Jimbo algebra are in fact torsion-free answering the question posed by Meyer in \cite{mehom}. This proof uses the case when the group is $SU(2)$, which was studied in \cite{voigtfree}.

\section{Preliminaries}
We will start the paper by setting notations and recalling some useful results from the paper \cite{vaku} where the notion of a reduced locally compact quantum group was introduced. The letter $S$ will denote a bi-$C^*$-algebra with comultiplication $\Delta:S\to \mathcal{M}(S\otimes S)$ satisfying the coassociativity condition 
\[(\Delta\otimes \id)\Delta=(\id\otimes\Delta)\Delta.\]
The tensor products we will use throughout the paper is the minimal tensor product for $C^*$-algebras which will be denoted by $\otimes$ and we will denote the von Neumann tensor product by $\overline{\otimes}$. As usual, $\mathcal{M}(A)$ denotes the multiplier algebra of $A$. A weight $\phi$ is called proper if it is non-zero, densely defined and lower semicontinuous. A proper weight $\phi$ on $S$ is said to be left invariant if it satisfies 
\[\phi((\omega \otimes \id(\Delta(a)))=\omega(1)\phi(a)\] 
for all positive $a$ such that $\phi(a)<+\infty$ and $\omega\in S^*_+$. The definition of a left invariant proper weight makes sense because by Definition $1.8$ of \cite{vaku}, proper weights on $S$ extends to proper weights on $\mathcal{M}(S)$. Similarly, a proper weight $\psi$ on $S$ is said to be right invariant if 
\[\psi((\id \otimes \omega(\Delta(a)))=\omega(1)\psi(a)\]
for all positive $a$ such that $\psi(a)<+\infty$ and $\omega\in S^*_+$. We will use the standard notation $[X]$ for the closed linear span of a subset $X$ of a Banach space. Let us recall the definition of a locally compact quantum group in the reduced setting:

\begin{deef}[Definition $4.1$ of \cite{vaku}]
If $S$ satisfies
\[S=[(\id \otimes \omega(\Delta(a)): a\in S, \omega\in S^*]=[(\omega \otimes \id(\Delta(a)): a\in S, \omega\in S^*]\]
and there exist faithful approximate KMS-weights $\phi$ and $\psi$, which should be left invariant respectively right invariant, we say that $S$ is a reduced $C^*$-algebraic quantum group. 
\end{deef}

The weights $\phi$ and $\psi$ are called left, respectively right, Haar weights. By Corollary $6.11$ of \cite{vaku} the density conditions in the definition of a reduced locally compact quantum group together with the existence of Haar weights imply the cancellation condition
\[S\otimes S=[\Delta(S)(1\otimes S)]=[\Delta(S)(S\otimes 1)].\]
See also the von Neumann-algebraic version of a quantum group in \cite{vakutv}. In the von Neumann-setting a density requirement on the quantum group is unnecessary. In \cite{vakutv} the conceptually remarkable result that there exists a unique reduced $C^*$-algebraic quantum group contained in every von Neumann-algebraic quantum group is proved. 

We recall the standard notation $\mathcal{N}_\phi:=\{a\in S: \phi(a^*a)<+\infty\}$. The sesquilinear form $\langle a,b\rangle _\phi:=\phi(a^*b)$ defines a non-degenerate scalar product on $\mathcal{N}_\phi$  since $\phi$ is faithful. The Hilbert space closure of $\mathcal{N}_\phi$ will be denoted by $H_S$. The Hilbert space $H_S$ carries a faithful representation $\lambda:S\to \Bo(H_S)$ given by the GNS-representation associated with $\phi$. The representation $\lambda$ is called the left regular representation of $S$. We denote the embedding $\mathcal{N}_\phi\hookrightarrow H_S$ by $\Lambda_\phi$. Define a linear mapping $W$ on $H_S\otimes H_S$ in the dense subspace $\im (\Lambda_\phi \otimes \Lambda_\phi)$ by 
\[W^*(\Lambda_\phi (a)\otimes \Lambda _\phi(b)):=\Lambda_\phi \otimes \Lambda_\phi(\Delta(b)a\otimes 1) \quad \mbox{for} \quad a,b\in \mathcal{N}_\phi.\]
By Proposition $3.17$ of \cite{vaku} the operator $W$ is a unitary operator such that
\[\lambda\otimes \lambda(\Delta (a))= W^*(1\otimes \lambda(a))W.\]
Furthermore, equation $(4.2)$ of \cite{vaku} states
\[\lambda(S)=[(\id \otimes \omega)(W): \omega \in \Bo (H_S)_*]=[(\id \otimes \omega)(W^*): \omega \in \Bo (H_S)_*].\]

The unitary operator $W$ satisfies the pentagonal equation 
\[W_{12}W_{13}W_{23}=W_{23}W_{12},\]
so we say that $W$ is a multiplicative unitary. We may define an antipode on a dense subalgebra of $S$ using the multiplicative unitary $W$. The antipode is defined as: 
\[\mathcal{S}\left((\id\otimes \omega)(W)\right):=(\id\otimes \omega)(W^*) \quad \mbox{for}\quad \omega\in \Bo(H_S)_*.\]
The mapping $\mathcal{S}$ is well defined and an anti-automorphism of its domain. By Proposition $5.22$ and $5.24$ of \cite{vaku} there is a polar decomposition 
\[\mathcal{S}=R\tau_{-\frac{i}{2}},\]
where $R$ is an anti-automorphism of $S$ and $\tau_{-\frac{i}{2}}$ is the densely defined extension of the strongly continuous one-parameter group $(\tau_t)_{t\in \R}$ associated with the left-invariant weight $\phi$ to $-\frac{i}{2}$. As is shown in Proposition $5.26$ of \cite{vaku} the anti-automorphism $R$ satisfies the equation
\[\sigma\circ (R\otimes R)\circ \Delta=\Delta \circ R,\] 
where $\sigma$ denotes the flip mapping. So the weight $a\mapsto \phi(R(a))$ satisfies the conditions on a right invariant Haar weight and we may assume $\psi\equiv \phi\circ R$. Let $\rho$ be the GNS-representation of $S$ associated with this particular choice of right Haar weight. As is shown in \cite{vaku}, the GNS-construction $\rho$ may be expressed via the antipode $R$, the left regular representation $\lambda$ and the modular conjugation operator $J$ of $\phi$ via
\[\rho(a)=J\lambda(R(a)^*)J.\]
Therefore, we have the inclusions $\rho(S)\subseteq \lambda(S)'$ and $\lambda(S)\subseteq \rho(S)'$ and the representations $\lambda$ and $\rho$ commute. 

Let $\hat{S}$ denote the closed subset in $\Bo(H_S)$ defined as
\begin{equation}
\label{dualdense}
[(\omega \otimes\id )(W): \omega \in \Bo(H_S)_*].
\end{equation}
The space $\hat{S}$ forms a $C^*$-algebra by Proposition $1.4$ and $3.5$ from \cite{baskauni}. Another equivalent approach to the dual $\hat{S}$ is by using the multiplicative unitary $\hat{W}:=\sigma W^*\sigma$ and defining $\hat{S}$ as
\begin{equation}
\label{dualdef}
\hat{S}:=[(\id\otimes \omega)(\hat{W}): \omega \in \Bo(H_S)_*].
\end{equation}
Define the mapping $\hat{\Delta}:\hat{S}\to \Bo(H_S\otimes H_S)$ as
\[\hat{\Delta}(x):= \hat{W}^*(1\otimes x)\hat{W}.\]
By Theorem $8.20$ and $8.29$ of \cite{vaku} there exist Haar weights $\hat{\phi}$ and $\hat{\psi}$ making the pair $(\hat{S},\hat{\Delta})$ into a reduced $C^*$-algebraic quantum group and there exist a natural Pontryagin duality $\hat{\hat{S}}\cong S$.

Similarly to the setting for $S$ we have a left regular and a right regular representation of $\hat{S}$ on $H_S$. We will denote them by $\hat{\lambda}$ and $\hat{\rho}$. Using the definition in equation \eqref{dualdef} we have that $\hat{\lambda}$ coincides with the inclusion $\hat{S}\subseteq \Bo(H_S)$, because of Proposition $8.16$ of \cite{vaku}. The relations between the right and the left regular representations can be described via the modular conjugation operators of its dual as follows
\[\rho(a)=J\hat{J}\lambda(a)\hat{J}J \quad \mbox{and} \quad \hat{\rho}(a)=\hat{J}J\hat{\lambda}(a)J\hat{J}.\]

The unitary $W$ is called the left regular corepresentation of $S$ and $\hat{W}$ is called the left regular corepresentation of $\hat{S}$. From the reasonings in chapter $3.4$ of \cite{vaku} it follows that $W\in (\lambda\otimes \hat{\lambda})(\mathcal{M}(S\otimes \hat{S}))$ and $\hat{W}\in (\hat{\lambda}\otimes \lambda)(\mathcal{M}(\hat{S}\otimes S))$.

Similarly one can define the right regular corepresentation of the quantum group $S$ as the unitary $V\in (\hat{\rho}\otimes \rho)(\mathcal{M}(\hat{S}\otimes S))$ such that 
\[(\rho\otimes \rho)(\Delta(a))=V(\rho(a)\otimes 1)V^*.\]
A straight forward calculation shows 
\[V=(J\hat{J}\otimes J\hat{J})\hat{W}(\hat{J}J\otimes \hat{J} J).\]
The right regular corepresentation $\hat{V}$ of $\hat{S}$ may be defined in the same way as $V$ but for $\hat{S}$. By either Pontryagin duality or another straight forward calculation it follows that 
\[\hat{V}=(\hat{J}J\otimes \hat{J}J)W(J\hat{J}\otimes J\hat{J}).\]
\[\]

Just as in \cite{voigtfree}, if $A$ is a $C^*$-algebra we define 
\[\mathcal{M}_S(A\otimes S):=\{T\in \mathcal{M}(A\otimes S):(1\otimes s)T,\;T(1\otimes s)\in A\otimes S \;\forall s\in S\}.\]
Recall that a non-degenerate $*$-homomorphism $\Delta_A:A\to \mathcal{M}_S(A\otimes S)$ is called a coaction if it is coassociative in the sense that $(\id \otimes \Delta)\Delta_A=(\Delta_A\otimes \id)\Delta_A$. If $\Delta_A$ is faithful, we say that the coaction $\Delta_A$ is reduced. If $\Delta_A(A)\cdot 1\otimes S$ is dense in $A\otimes S$ we say that the coaction is continuous. We may associate a representation $\lambda_A:A\to \ellL(A\otimes H_S)$ with a coaction by $\lambda_A:=(\id\otimes \lambda) \circ \Delta_A$. The reduced  crossed product $A\rtimes_r  S$ is defined as 
\[A\rtimes_r S:=[\lambda_A(A)\cdot 1\otimes \hat{\rho}(\hat{S})]\subseteq \ellL(A\otimes H_S).\]

\begin{prop}
\label{contcro}
The space $A\rtimes_r  S$ forms a $C^*$-algebra with a continuous, reduced coaction of $\hat{S}$ defined by 
\[\Delta_{A\rtimes_r  S}(a\rtimes  \hat{s}):=\lambda_A(a)\cdot 1\otimes (\hat{\rho}\otimes \id)\hat{\Delta}(\hat{s}).\]
\end{prop}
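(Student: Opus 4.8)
The statement bundles four claims: that $A\rtimes_r S$ is a $C^*$-algebra, that the displayed formula is a well-defined $\ast$-homomorphism, that it is a non-degenerate coassociative coaction with values in $\mathcal{M}_{\hat{S}}((A\rtimes_r S)\otimes\hat{S})$, and that it is continuous and reduced. My plan is to settle the algebra structure first, then to realise $\Delta_{A\rtimes_r S}$ as the restriction of an adjoint action so that the homomorphism property and reducedness are automatic, and finally to read off coassociativity and continuity from the corresponding properties of $\hat{\Delta}$.

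First I would show that $A\rtimes_r S=[\lambda_A(A)\cdot 1\otimes\hat{\rho}(\hat{S})]$ is a $C^*$-algebra. As it is by construction a closed subspace of $\mathcal{L}(A\otimes H_S)$, all that is needed is stability under multiplication and adjoints, and the crux of this is the covariance relation
\[
[\,(1\otimes\hat{\rho}(\hat{S}))\cdot\lambda_A(A)\,]=[\,\lambda_A(A)\cdot(1\otimes\hat{\rho}(\hat{S}))\,].
\]
Granting it, the flip needed for both the product of two generators and for the adjoint $(\lambda_A(a)\,1\otimes\hat{\rho}(\hat{s}))^{*}=1\otimes\hat{\rho}(\hat{s}^{*})\,\lambda_A(a^{*})$ stays inside the span, so the closure is a $C^*$-algebra. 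To obtain the covariance relation I would write $\hat{\rho}(\hat{S})$ as slices of the right regular corepresentation $\hat{V}\in(\rho\otimes\hat{\rho})(\mathcal{M}(S\otimes\hat{S}))$ and move it through $\lambda_A(A)=(\id\otimes\lambda)\Delta_A(A)$ with the help of the coaction identity $(\id\otimes\Delta)\Delta_A=(\Delta_A\otimes\id)\Delta_A$, the relation $(\lambda\otimes\lambda)\Delta(a)=W^{*}(1\otimes\lambda(a))W$ and the pentagon equation; the controlling fact is that the first leg of $\hat{V}$ lies in $\rho(S)\subseteq\lambda(S)'$, which regulates how $\hat{\rho}(\hat{S})$ passes the $\lambda(S)$-leg of $\lambda_A$.

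Next I would define the coaction by an adjoint action on $A\otimes H_S\otimes H_S$. Put $U:=1_A\otimes(1\otimes J\hat{J})\hat{V}$, acting on the two $H_S$-legs, and set $\Delta_{A\rtimes_r S}(x):=U(x\otimes 1)U^{*}$. Being conjugation by a unitary, this is at once an injective $\ast$-homomorphism, which already yields that the coaction is reduced; the task is to check that it reproduces the displayed formula on the two families of generators, which is exactly the assertion that the formula is well defined and multiplicative. On $\lambda_A(A)$ the unitary $U$ commutes with $\lambda_A(a)\otimes 1$, because the $\hat{V}$-factor has leg-two component in $\rho(S)\subseteq\lambda(S)'$ while the factor $J\hat{J}$ touches only the third leg, giving $\Delta_{A\rtimes_r S}(\lambda_A(a))=\lambda_A(a)\otimes 1$. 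On $1\otimes\hat{\rho}(\hat{S})$ the defining relation $\hat{V}(\hat{\rho}(\hat{s})\otimes 1)\hat{V}^{*}=(\hat{\rho}\otimes\hat{\rho})\hat{\Delta}(\hat{s})$ together with $\hat{\rho}(\cdot)=\hat{J}J\hat{\lambda}(\cdot)J\hat{J}$, used to turn the second leg back from $\hat{\rho}$ to $\hat{\lambda}=\id$ via the conjugation $J\hat{J}$, produces precisely $1\otimes(\hat{\rho}\otimes\id)\hat{\Delta}(\hat{s})$. This is the most delicate algebraic step, since it is where the several modular conjugations must be tracked without error.

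It then remains to verify coassociativity and continuity. Coassociativity $(\id\otimes\hat{\Delta})\Delta_{A\rtimes_r S}=(\Delta_{A\rtimes_r S}\otimes\id)\Delta_{A\rtimes_r S}$ is trivial on $\lambda_A(A)$, where both sides return $\lambda_A(a)\otimes 1\otimes 1$, and on $1\otimes\hat{\rho}(\hat{S})$ it collapses to coassociativity of $\hat{\Delta}$. For non-degeneracy, for the requirement that the range lie in $\mathcal{M}_{\hat{S}}((A\rtimes_r S)\otimes\hat{S})$, and for continuity, I would invoke the cancellation law $[\hat{\Delta}(\hat{S})(1\otimes\hat{S})]=\hat{S}\otimes\hat{S}$ for $\hat{S}$ (the Pontryagin dual of the law recorded after the definition) to show that $[\Delta_{A\rtimes_r S}(1\otimes\hat{\rho}(\hat{S}))\,(1\otimes 1\otimes\hat{S})]$ recovers $[\,1\otimes\hat{\rho}(\hat{S})\otimes\hat{S}\,]$, and then that left multiplication by $\Delta_{A\rtimes_r S}(\lambda_A(A))=\lambda_A(A)\otimes 1$ fills out all of $(A\rtimes_r S)\otimes\hat{S}$. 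I expect this last density and multiplier bookkeeping — confirming that the adjoint action genuinely lands in $\mathcal{M}_{\hat{S}}$ and that the density condition defining continuity holds — to be the main obstacle, being the place where the minimal-tensor-product and strict-topology subtleties of the crossed product all converge.
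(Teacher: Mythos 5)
Your proposal is correct and follows essentially the same route as the paper: the coaction is realised as conjugation by (a leg-shifted copy of) the right regular corepresentation $\hat{V}$, which yields well-definedness and reducedness in one stroke, and continuity is read off from the cancellation law $[\hat{\Delta}(\hat{S})(1\otimes\hat{S})]=\hat{S}\otimes\hat{S}$ exactly as in the paper's displayed computation. The only divergence is that the paper outsources the $*$-algebra property of $[\lambda_A(A)\cdot 1\otimes\hat{\rho}(\hat{S})]$ to Lemma $7.2$ of Baaj--Skandalis, whereas you sketch the underlying covariance relation directly.
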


The coaction $\Delta_{A\rtimes_r S}$ is called the dual coaction on $A\rtimes_r  S$ and was defined in Definition $7.3$ of \cite{baskauni}. 

\begin{proof}
As is shown in Lemma $7.2$ in \cite{baskauni}, the closed linear span of $\lambda_A(A)\cdot 1\otimes \hat{\rho}(\hat{S})$ is a $*$-algebra. So $A\rtimes_r  S$ forms a $C^*$-algebra. Let $a\rtimes  \hat{s}$ denote the element $\lambda_A(a)1\otimes\hat{\rho}( \hat{s})$. The dual coaction on $A\rtimes_r S$ is reduced since it is implemented by the unitary $V$ as 
\[(\id_{A\rtimes_rS}\otimes \hat{\rho})\Delta_{A\rtimes_r  S}(a\rtimes  \hat{s})=Ad(V_{23})\left(\lambda_A(a)_{12}\cdot1\otimes\hat{\rho}( \hat{s})\otimes 1\right).\]
Since $\hat{S}$ is a reduced locally compact quantum group, $\hat{\Delta}(\hat{S})\cdot 1\otimes \hat{S}$ is dense in $\hat{S}\otimes \hat{S}$ and 
\begin{align*}
[\Delta_{A\rtimes_r S}(A\rtimes _r S)\cdot 1_{A\rtimes_r S}\otimes \hat{S}]&=[\lambda_A(A)_{12}\cdot (\hat{\rho}\otimes \id)\hat{\Delta}(\hat{S}))_{23}\cdot 1_{A\rtimes_rS}\otimes \hat{S}]=\\
&=[\lambda_A(A)_{12}\cdot 1\otimes \hat{\rho}(\hat{S})\otimes \hat{S}]=(A\rtimes_r S)\otimes \hat{S}
\end{align*}
which proves that the dual coaction is continuous.
\end{proof}

\section{Twists of reduced quantum groups}

Twists plays a central role in quantization of Lie algebras. The twists can be seen as quantized infinitesimal group cocycles of the quantized Lie group. The main examples of locally compact quantum groups have been compact quantum groups. The usual source of examples of compact quantum groups comes from deformations. Deformations are difficult to fit into a general framework since the underlying $C^*$-algebra and its deformation are very hard to relate because they have, by construction, different multiplication. Since deformations are dual to twists, one can choose the equivalent approach of using twists in the study of deformations.

\begin{deef}
\label{twistdef}
If the unitary $\Fg \in \mathcal{M}(S\otimes S)$ satisfies the density condition:
\[[\omega\otimes \id(\Fg\Delta(a)\Fg^*):\omega \in S^*]=[ \id\otimes \omega(\Fg\Delta(a)\Fg^*):\omega\in S^*]= S,\] 
and the cocycle condition 
\[Ad\left(\Fg_{23}(\id\otimes \Delta)(\Fg)\right)(\Delta^{(2)}(s))=Ad\left(\Fg_{12}(\Delta \otimes \id )(\Fg)\right)(\Delta^{(2)}(s)) \quad \forall s\in S,\]
$\Fg$ is called a cocycle twist of $S$. If a cocycle twist $\Fg$ satisfies the stronger condition 
\[\Fg_{23}(\id\otimes \Delta)(\Fg)=\Fg_{12}(\Delta \otimes \id )(\Fg),\] 
we say that $\Fg$ is a twist. 
\end{deef}

Because of the cocycle condition we may define a new, coassociative comultiplication $\Delta_\Fg:=Ad \Fg \; \circ\Delta$ on $S$. The cocycle condition is in fact equivalent to $\Delta_\Fg$ being coassociative. If $\Fg$ is a cocycle twist we let $S_\Fg$ denote the bi-$C^*$-algebra $S$ with comultiplication $\Delta_\Fg$. We define the associator of $\Fg$ as:
\[\Phi_\Fg:=(\id\otimes \Delta)(\Fg^*)\Fg_{23}^*\Fg_{12}(\Delta \otimes \id )(\Fg).\]
The cocycle condition for $\Fg$ is equivalent to $\Phi_\Fg$ commuting with $\Delta^{(2)}(S)$. 

Observe that if $\Fg$ is a cocycle twist of $S$ and $U\in \mathcal{M}(S)$ we can define a new twist by $\Fg':=\Delta_\Fg(U)\Fg$. The unitary $U$ induces an isomorphism between the two bi-$C^*$-algebras $S_\Fg$ and $S_{\Fg'}$ since $\Delta_\Fg(U^*sU)=\Delta_{\Fg'}(s)$.\\

To explain the cocycle condition for $\Fg$ we can take $S$ as a quantum group admitting a counit and let $\mathfrak{C}$ denote the representation category of $S$. The objects of $\mathfrak{C}$ are representations $\pi:S\to \Bo(H)$ and a morphism $\pi_1\to\pi_2$ is an intertwining operator. The category $\mathfrak{C}$ becomes a monoidal category by defining $\pi_1\otimes_\mathfrak{C} \pi_2(s):=(\pi_1\otimes \pi_2)\circ \Delta(s)$ and the identity object is given by the counit $\epsilon:S\to \C$. Since $\Delta$ is coassociative, $\mathfrak{C}$ is a strict monoidal category.

We can normalize $\Fg$ such that $\id\otimes \epsilon \otimes \id(\Phi_\Fg)=1$. When $\Fg$ is normalized we may twist the tensor product $\otimes _\mathfrak{C}$ by $\Fg$ as  $\pi_1\otimes_{\mathfrak{C}_\Fg} \pi_2(s):=(\pi_1\otimes \pi_2)\circ \Delta_\Fg(s)$. Letting $\mathfrak{C}_\Fg$ denote the monoidal category given by equipping $\mathfrak{C}$ with the tensor product $\otimes _{\mathfrak{C}_\Fg}$ and let the associator of $\mathfrak{C}_\Fg$ be $\Phi_\Fg$. The representation category $\mathfrak{C}_\Fg$ is strict if and only if $\Fg$ is a twist.

\begin{deef}
\label{feasi}
If the bi-$C^*$-algebra $S_\Fg$ admits the structure of a reduced locally compact quantum group, we say that the cocycle twist $\Fg$ is feasible.
\end{deef}

To study twists we recall the following theorem from \cite{deco}:

\begin{sats}[Corollary $6.2$ and Proposition $6.4$ of \cite{deco}]
\label{decothm}
Let $M$ denote the von Neumann algebra generated by $S$ in its left regular representation. If $\Fg\in M\overline{\otimes} M$ satisfies 
\[\Fg_{23}(\id\otimes \Delta)(\Fg)=\Fg_{12}(\Delta \otimes \id )(\Fg).\]
there exist Haar weights with respect to $\Delta_\Fg$ on $M$ and the left regular corepresentation of $S_\Fg$ is given by 
\[W_\Fg= (\hat{J}_\Fg\otimes J)\Fg W^*(\hat{J}\otimes J)\Fg^*\]
where $\hat{J}_\Fg$ denote the modular conjugation operator of $\widehat{M_\Fg}$.
\end{sats}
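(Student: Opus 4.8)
The twist identity $\Fg_{23}(\id\otimes \Delta)(\Fg)=\Fg_{12}(\Delta \otimes \id )(\Fg)$ is a strengthening of the cocycle condition, so $\Delta_\Fg=\mathrm{Ad}\,\Fg\circ\Delta$ is already coassociative and $(M,\Delta_\Fg)$ is a bi--von Neumann algebra; the two genuine tasks are (i) to produce faithful normal semifinite left- and right-invariant weights for $\Delta_\Fg$ and (ii) to identify the left regular corepresentation. The plan is to reduce (i) to the reconstruction theory for multiplicative unitaries. Namely, I would exhibit a manageable (or modular) multiplicative unitary $W_\Fg$ on $H_S$ such that $\Delta_\Fg(x)=W_\Fg^*(1\otimes x)W_\Fg$ and $M=[(\id\otimes\omega)(W_\Fg):\omega\in\Bo(H_S)_*]''$; once such a $W_\Fg$ is in hand, the von Neumann algebraic reconstruction used in \cite{vakutv} furnishes the Haar weights for $\Delta_\Fg$ and identifies $W_\Fg$ as the left regular corepresentation, settling both (i) and (ii) at once.

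The starting point for constructing $W_\Fg$ is the observation that, since $\Delta(x)=W^*(1\otimes x)W$, we have $\Delta_\Fg(x)=\Fg W^*(1\otimes x)W\Fg^*=(W\Fg^*)^*(1\otimes x)(W\Fg^*)$, so the unitary $W\Fg^*$ already implements $\Delta_\Fg$. The difficulty is that $W\Fg^*$ need not satisfy the pentagon equation, because the twisting unitary $\Fg\in M\overline{\otimes}M$ deforms the second leg inside $M$ rather than inside the dual $\widehat{M}$. To repair this one transports the offending leg to the commutant representation by means of the modular conjugations, which is the role of the operators $\hat{J}\otimes J$ (together with $\hat{J}_\Fg\otimes J$ on the other side) appearing in the asserted formula
\[W_\Fg=(\hat{J}_\Fg\otimes J)\Fg W^*(\hat{J}\otimes J)\Fg^*.\]
Using the structural relations recalled above, namely $\rho(a)=J\hat{J}\lambda(a)\hat{J}J$ and $\hat{V}=(\hat{J}J\otimes \hat{J}J)W(J\hat{J}\otimes J\hat{J})$, together with the twist identity for $\Fg$, I would check that $W_\Fg$ still implements $\Delta_\Fg$ and now satisfies $(W_\Fg)_{12}(W_\Fg)_{13}(W_\Fg)_{23}=(W_\Fg)_{23}(W_\Fg)_{12}$, deducing manageability by producing the accompanying positive manageability operator and companion unitary from the modular data $\nabla,J$ of $\phi$ and the cocycle $\Fg$.

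Since the displayed formula refers to $\hat{J}_\Fg$, the modular conjugation of the as yet unconstructed dual $\widehat{M_\Fg}$, the logical order must be arranged so as to avoid circularity: one first produces $(M,\Delta_\Fg)$ together with its Haar weights and its dual by the reflection/Galois-object machinery of \cite{deco} --- equivalently, by showing that the left-invariant weight for $\Delta_\Fg$ is obtained from $\phi$ and the cocycle $\Fg$ through a modification of the Tomita--Takesaki data on $H_S$ --- and only afterwards does one read off $\hat{J}_\Fg$ and verify that the closed expression above reproduces the left regular corepresentation. I expect the construction of the invariant weight to be the main obstacle: coassociativity and the pentagon identity are essentially formal consequences of the twist condition, whereas proving that the candidate weight is genuinely left-invariant, faithful, normal and semifinite for $\Delta_\Fg$ requires controlling how conjugation by $\Fg$ deforms the modular operator and modular conjugation of $\phi$, which is the analytically delicate heart of the statement.
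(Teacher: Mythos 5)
The paper does not actually prove this statement: it is quoted from \cite{deco}, and the text immediately following it says only that the proof there is technical and proceeds by using the twist $\Fg$ to construct a Galois object over $M$. So the comparison is between your plan and De Commer's Galois-object/reflection argument, not a proof printed in this paper.

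Your proposal has a genuine gap at its central step. You propose to exhibit a manageable (or modular) multiplicative unitary $W_\Fg$ implementing $\Delta_\Fg$ and then let ``the von Neumann algebraic reconstruction used in \cite{vakutv} furnish the Haar weights.'' No such reconstruction exists: from a manageable multiplicative unitary one recovers the $C^*$-bialgebra, the density conditions and the antipode, but the existence of invariant weights is an independent axiom in the Kustermans--Vaes framework precisely because it is not known to follow from the multiplicative-unitary data --- \cite{vakutv} \emph{assumes} the Haar weights, it does not construct them. Since the existence of Haar weights for $\Delta_\Fg$ is the analytic content of the theorem (as you yourself observe in your closing paragraph), the proposed reduction does not close. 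Compounding this is the circularity you correctly flag but do not resolve: the formula for $W_\Fg$ involves $\hat{J}_\Fg$, the modular conjugation of the dual of the not-yet-constructed twisted quantum group, so it cannot serve as the definition of the object from which the weights are to be built. Your fallback --- ``first produce $(M,\Delta_\Fg)$, its Haar weights and its dual by the reflection/Galois-object machinery of \cite{deco}'' --- is indeed the correct route, but as written it simply cites the result being proved. De Commer's argument avoids both problems by building the weights first: the cocycle $\Fg$ yields a Galois object $N$ for $(M,\Delta)$ carrying canonical faithful normal semifinite invariant weights, the quantum group reflected across $N$ is $(M,\Delta_\Fg)$ with Haar weights inherited from those of $N$, and the closed formula for $W_\Fg$ is read off from the associated Galois unitaries only afterwards.
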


For the proof of Theorem \ref{decothm} we refer the reader to \cite{deco}. The proof is rather technical in nature and consists of using the twist $\Fg$ to define a Galois object over $M$. The content of Theorem \ref{decothm} together with Proposition $1.6$ of \cite{vakutv} implies that if $\Fg$ is a twist of $S$, there is a unique $C^*$-subalgebra $S(\Fg)\subseteq M$ of the von Neumann closure of $S$ that becomes a reduced locally compact quantum group in the comultiplication $\Delta_\Fg$. Clearly, $\Fg$ is feasible if and only if $S(\Fg)=S_\Fg$. However, it is quite hard to determine $S(\Fg)$ in general. In \cite{deco} a twist of a compact quantum group was constructed in such a way that the twisted quantum group was not compact. We will in this paper mainly focus on feasible twists.

\begin{prop}
If $\Fg$ is a (feasible) twist of $S$, $\Fg^*$ is a (feasible) twist of $S(\Fg)$ and $S(\Fg)(\Fg^*)=S$.
\end{prop}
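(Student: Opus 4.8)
The plan is to push everything to the von Neumann level via Theorem \ref{decothm} and then invoke the uniqueness of the reduced $C^*$-algebraic quantum group sitting inside a given von Neumann algebraic quantum group (Proposition $1.6$ of \cite{vakutv}). Three things need checking: that $\Fg^*$ satisfies the twist equation relative to $\Delta_\Fg$, that twisting $\Delta_\Fg$ back by $\Fg^*$ returns $\Delta$ exactly, and that the reduced quantum group produced by this second twist is $S$.

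First I would verify the twist equation for $\Fg^*$ relative to $\Delta_\Fg$. Since $\Delta_\Fg = Ad\Fg \circ \Delta$, every $x\in M\overline{\otimes}M$ satisfies $(\id\otimes\Delta_\Fg)(x)=\Fg_{23}(\id\otimes\Delta)(x)\Fg_{23}^*$ and $(\Delta_\Fg\otimes\id)(x)=\Fg_{12}(\Delta\otimes\id)(x)\Fg_{12}^*$. Substituting $x=\Fg^*$ and cancelling the outer factors against $(\Fg^*)_{23}$ and $(\Fg^*)_{12}$, the proposed identity
\[(\Fg^*)_{23}(\id\otimes \Delta_\Fg)(\Fg^*)=(\Fg^*)_{12}(\Delta_\Fg \otimes \id )(\Fg^*)\]
collapses to $(\id\otimes\Delta)(\Fg^*)\Fg_{23}^* = (\Delta\otimes\id)(\Fg^*)\Fg_{12}^*$, which is precisely the adjoint of the twist equation for $\Fg$. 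Hence $\Fg^*$ is a twist in the sense required by Theorem \ref{decothm}. At the same time $(\Delta_\Fg)_{\Fg^*} = Ad(\Fg^*)\circ Ad(\Fg)\circ\Delta = Ad(\Fg^*\Fg)\circ\Delta = \Delta$, so twisting back by $\Fg^*$ recovers the original comultiplication on the nose.

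Next I would apply Theorem \ref{decothm} to $S(\Fg)$ with the twist $\Fg^*$. For this I need the von Neumann algebra generated by $S(\Fg)$ in its left regular representation to be again $M$; this is built into the construction, as $S(\Fg)$ is by definition the reduced $C^*$-quantum group $\sigma$-weakly dense in the von Neumann quantum group $(M,\Delta_\Fg)$, so $\Fg^*\in M\overline{\otimes}M$ is a bona fide twist of it. Theorem \ref{decothm} then produces Haar weights on $M$ for the comultiplication $(\Delta_\Fg)_{\Fg^*}=\Delta$, that is, the von Neumann quantum group $(M,\Delta)$, and defines $S(\Fg)(\Fg^*)\subseteq M$ as the reduced $C^*$-quantum group it contains. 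But $(S,\Delta)$ is already a reduced $C^*$-quantum group $\sigma$-weakly dense in $(M,\Delta)$, so the uniqueness in Proposition $1.6$ of \cite{vakutv} forces $S(\Fg)(\Fg^*)=S$, which is the final assertion.

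Finally, the feasibility clause follows from $S(\Fg)(\Fg^*)=S$ and the definition of feasibility. If $\Fg$ is feasible then $S(\Fg)=S_\Fg$ coincides with $S$ as a $C^*$-algebra, so $\Fg^*\in\mathcal{M}(S\otimes S)=\mathcal{M}(S(\Fg)\otimes S(\Fg))$; moreover, using $\Fg^*\Delta_\Fg(a)\Fg=\Delta(a)$ the density condition of Definition \ref{twistdef} for $\Fg^*$ relative to $\Delta_\Fg$ reduces to the left and right density conditions of the genuine quantum group $(S,\Delta)$, and the underlying algebra $S(\Fg)_{\Fg^*}$ equals $S=S(\Fg)(\Fg^*)$, so $\Fg^*$ is feasible. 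The step I expect to be the main obstacle is the clean identification of the von Neumann algebra underlying $S(\Fg)$ with $M$ (including the matching of the GNS spaces implicit in the formula for $W_\Fg$), so that $\Fg^*$ genuinely qualifies as a twist of $S(\Fg)$ in the sense of Theorem \ref{decothm} and the uniqueness theorem applies symmetrically; the manipulations with the twist equation are routine by comparison.
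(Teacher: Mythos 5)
Your proposal is correct and follows essentially the same route as the paper: verify the strong cocycle condition for $\Fg^*$ relative to $\Delta_\Fg$ by conjugating with $\Fg_{23}$, $\Fg_{12}$ and taking the adjoint of the twist equation for $\Fg$, observe $(\Delta_\Fg)_{\Fg^*}=\Delta$ on $M$, and conclude $S(\Fg)(\Fg^*)=S$ by the uniqueness of the reduced $C^*$-algebraic quantum group inside $(M,\Delta)$ (the paper phrases this as uniqueness of Haar weights, which amounts to the same appeal to \cite{vaku}/\cite{vakutv}). Your explicit treatment of the feasibility clause is a small addition the paper leaves implicit, but it changes nothing in the argument.
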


\begin{proof}
If $\Fg$ is a twist of $S$ the strong cocycle condition for $\Fg$ implies that
\[\Fg^*_{23}(\id\otimes \Delta_\Fg)(\Fg^*)=(\id\otimes \Delta)(\Fg^*)\Fg^*_{23}=\]
\[=(\Delta\otimes\id)(\Fg^*)\Fg^*_{12}=\Fg^*_{12}(\Delta_\Fg\otimes\id)(\Fg^*).\]
Thus, if $\Fg$ is a twist of $S$ the unitary $\Fg^*$ satisfies the strong cocycle condition with respect to $\Delta_\Fg$ and is a twist of $S(\Fg)$. Clearly $(\Delta_\Fg)_{\Fg*}=\Delta$ as $*$-homomorphisms on the von Neumann algebra $M$, so the uniqueness of Haar weights, see \cite{vaku}, implies that $S(\Fg)(\Fg^*)=S$.
\end{proof}

Assume that $\Fg$ is a feasible twist of $S$. From $\Fg$ we can define a twisted coaction of the quantum group $S_\Fg$ on $C^*$-algebras of the form $A=B\rtimes_r \hat{S}$. Let $U_{\Fg}^{\rho}$ denote the intertwining unitary between the right regular representation $\rho$ of $S$ and the right regular representation $\rho_\Fg$ of $S_\Fg$, so 
\[\rho_\Fg=Ad(U_{\Fg}^{\rho})\circ\rho.\]
Define the $C^*$-algebra
\[A_\Fg:=Ad(1_B\otimes U_{\Fg}^{\hat{\rho}})(B\rtimes_r \hat{S}).\]
We will denote the elements of $A_\Fg$ by $b\rtimes_\Fg s:=Ad(1\otimes  U_{\Fg}^{\rho})(b\rtimes s)$. Define the morphism $\lambda_B^\Fg:B\to \mathcal{M}(B\otimes \Ko(H_{S_\Fg}))$ by
\[\lambda_B^\Fg:=Ad(1_A\otimes U_{\Fg}^{\rho})\circ \lambda_B.\]
We define the $*$-homomorphism
\[\Delta_{A_\Fg}:A_\Fg\to \mathcal{M}_{S_\Fg}(A_\Fg\otimes S_\Fg ),\]  
by defining it in the right regular representation $\rho_\Fg$ of $S_\Fg$ using the right regular corepresentation $V_{\Fg}$ 
\begin{equation}
\label{twistcoac}
(\id_{A_\Fg}\otimes \rho_\Fg)\Delta_{A_\Fg}(b\rtimes_\Fg s):=Ad(V_{\Fg,23})\left(b\rtimes_\Fg s\otimes 1\right)=
\end{equation}
\[=Ad(V_{\Fg,23})\left(\lambda_B^\Fg(b)_{12}\cdot 1\otimes \rho_\Fg(s)\otimes 1\right).\]

\begin{prop}
\label{contcoatw}
The $*$-homomorphism $\Delta_{A_\Fg}$ is a well defined coaction of $S_\Fg$ on $A_\Fg$ which is continuous and reduced.
\end{prop}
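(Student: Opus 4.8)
The plan is to verify the four required properties---well-definedness together with the $*$-homomorphism property, coassociativity, reducedness and continuity---by imitating, leg for leg, the proof of Proposition \ref{contcro}, the point being that feasibility of $\Fg$ makes $S_\Fg$ a genuine reduced locally compact quantum group. Consequently every structural ingredient used for $S$ in Proposition \ref{contcro} is available for $S_\Fg$: a faithful right regular representation $\rho_\Fg$ (the GNS representation of the right Haar weight of $S_\Fg$ furnished by Theorem \ref{decothm}), a unitary right regular corepresentation $V_\Fg\in(\hat\rho_\Fg\otimes\rho_\Fg)(\mathcal{M}(\hat S_\Fg\otimes S_\Fg))$ implementing $\Delta_\Fg$ through $(\rho_\Fg\otimes\rho_\Fg)(\Delta_\Fg(a))=V_\Fg(\rho_\Fg(a)\otimes 1)V_\Fg^*$, and the cancellation relations for $S_\Fg$ and its dual. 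Since $\rho_\Fg$ is faithful, $\id_{A_\Fg}\otimes\rho_\Fg$ is faithful, so formula \eqref{twistcoac} determines $\Delta_{A_\Fg}$ uniquely once its right-hand side is shown to land in $(\id\otimes\rho_\Fg)(\mathcal{M}_{S_\Fg}(A_\Fg\otimes S_\Fg))$.

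The first and crucial step is therefore well-definedness. Both $x\mapsto x\otimes 1$ and $Ad(V_{\Fg,23})$ are $*$-homomorphisms, so $(\id\otimes\rho_\Fg)\Delta_{A_\Fg}$ is automatically a $*$-homomorphism, and $\Delta_{A_\Fg}$ will be one as soon as it is well defined. To locate the image I would exploit that $V_{\Fg,23}$ and $\lambda_B^\Fg(b)_{12}$ share only the second leg, in which the relevant factor of $V_\Fg$ sits in $\hat\rho_\Fg(\hat S_\Fg)$ while that of $\lambda_B^\Fg(b)$ sits in the image of a left-regular-type representation; as in the untwisted case these two commute, by the commutation of left and right regular representations transported through $U_{\Fg}^{\rho}$ and the explicit corepresentation of Theorem \ref{decothm}. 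Granting this, $Ad(V_{\Fg,23})$ fixes $\lambda_B^\Fg(b)_{12}$ and only reshuffles $1\otimes\rho_\Fg(s)\otimes 1$ into an element of the last two legs lying in $(\hat\rho_\Fg\otimes\rho_\Fg)(\mathcal{M}(\hat S_\Fg\otimes S_\Fg))$, so that the full expression $\lambda_B^\Fg(b)_{12}\cdot Ad(V_{\Fg,23})(1\otimes\rho_\Fg(s)\otimes 1)$ lies in the slice-multiplier algebra $\mathcal{M}_{S_\Fg}(A_\Fg\otimes S_\Fg)$.

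With well-definedness in hand the remaining properties follow the template of Proposition \ref{contcro}. Coassociativity comes from iterating \eqref{twistcoac} and invoking the corepresentation (pentagon) identity for $V_\Fg$, exactly as for $V$ in the untwisted setting. Reducedness is immediate: $V_{\Fg,23}$ is unitary and both $x\mapsto x\otimes 1$ and $\id\otimes\rho_\Fg$ are injective, so $(\id\otimes\rho_\Fg)\Delta_{A_\Fg}$, and hence $\Delta_{A_\Fg}$, is injective. For continuity I would reproduce the density chain in the proof of Proposition \ref{contcro}: since $S_\Fg$ is a reduced quantum group, $\hat\Delta_\Fg(\hat S_\Fg)\cdot 1\otimes\hat S_\Fg$ is dense, and the corresponding cancellation relation collapses $[\Delta_{A_\Fg}(A_\Fg)\cdot 1_{A_\Fg}\otimes S_\Fg]$ onto $[\lambda_B^\Fg(B)_{12}\cdot 1\otimes\rho_\Fg(S_\Fg)\otimes S_\Fg]=A_\Fg\otimes S_\Fg$.

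The main obstacle is precisely the well-definedness step, namely showing that $Ad(V_{\Fg,23})(b\rtimes_\Fg s\otimes 1)$ genuinely lands in $\mathcal{M}_{S_\Fg}(A_\Fg\otimes S_\Fg)$ rather than in a larger multiplier algebra; this rests entirely on the leg-commutation relations among $\lambda_B^\Fg$, $\rho_\Fg$, $\hat\rho_\Fg$ and $V_\Fg$, which must be extracted from feasibility and the explicit form of the twisted corepresentation in Theorem \ref{decothm}. An equivalent route is to transport the untwisted dual coaction on $A=B\rtimes_r\hat S$ through the isomorphism $Ad(1\otimes U_{\Fg}^{\rho})$, which reduces everything to a single intertwining relation between $V_\Fg$ and $V$; but since that relation again issues from Theorem \ref{decothm}, both approaches concentrate the essential difficulty in the same place.
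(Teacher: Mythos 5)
Your proposal follows the same skeleton as the paper's proof: coassociativity from the pentagon identity for $V_\Fg$, reducedness from the fact that the coaction is implemented by the unitary $V_{\Fg,23}$ together with the injectivity of $x\mapsto x\otimes 1$ and of $\id\otimes\rho_\Fg$, and continuity from the cancellation property of the reduced quantum group $S_\Fg$ (which is exactly where feasibility enters), applied after conjugating the relevant closed linear span by $V_{\Fg,23}$. The one place where you genuinely diverge is the well-definedness step. The paper multiplies $\Delta_{A_\Fg}(b\rtimes_\Fg s_1)$ on the right by $1_{A_\Fg}\otimes\rho_\Fg(s_2)$, inserts $V_{\Fg,23}^*V_{\Fg,23}$, and invokes $\Delta_\Fg(s_1)\cdot(1\otimes s_2)\in S_\Fg\otimes S_\Fg$, leaving the factor $V_{\Fg,23}\,\lambda_B^\Fg(b)_{12}\,V_{\Fg,23}^*$ untouched; you instead propose to prove outright that $V_{\Fg,23}$ commutes with $\lambda_B^\Fg(b)_{12}$, after which the membership in $\mathcal{M}_{S_\Fg}(A_\Fg\otimes S_\Fg)$ is immediate. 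These are essentially the same argument: the paper's version also tacitly needs the conjugated first factor to land back in the right place, which is precisely the leg-commutation you isolate, so your account is if anything more candid about where the real content sits. Note, however, that you only \emph{assume} that commutation (``granting this''), just as the paper leaves it implicit, so neither text actually discharges it; if you want a complete argument, your alternative route --- transporting the untwisted dual coaction of Proposition \ref{contcro} through $Ad(1\otimes U_\Fg^{\rho})$, where the first factor is never conjugated at all --- is probably the cleaner way to make the issue disappear. One small correction: for continuity the relevant density is that of $\Delta_\Fg(S_\Fg)\cdot(S_\Fg\otimes 1)$ in $S_\Fg\otimes S_\Fg$; the coaction being twisted here is one of $S_\Fg$, not of its dual, so $\hat{\Delta}_\Fg$ plays no role in that computation.
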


The coaction $\Delta_{A_\Fg}$ will be called the twisted coaction on $A_\Fg$ by $\Fg$. 

\begin{proof}
From the pentagonal identity for $V_\Fg$ it follows that $\Delta_{A_\Fg}$ is coassociative. To prove the inclusion $\Delta_{A_\Fg}(A_\Fg)\subseteq \mathcal{M}_{S_\Fg}(A_\Fg\otimes S_\Fg)$, we observe that if we take $a\in A$ and $s_1,s_2\in S$
\[(\id_{A_\Fg}\otimes \rho_\Fg)\Delta_{A_\Fg}(b\rtimes_\Fg  s_1)\cdot 1_{A_\Fg}\otimes \rho_\Fg(s_2)=\]
\[=V_{\Fg,23}\lambda_B^\Fg(b)_{12}V_{\Fg,23}^*\cdot \left((\rho_\Fg\otimes \rho_\Fg(\Delta_\Fg(s_1)))\cdot 1_{S_\Fg}\otimes\rho_\Fg(s_2)\right)_{23}.\]
Since $\Delta_\Fg(s_1) \cdot 1_{S}\otimes s_2\in S_\Fg\otimes S_\Fg$ it follows that $\Delta_{A_\Fg}(b\rtimes_\Fg  s_1)\cdot 1_{A_\Fg}\otimes s_2\in  \mathcal{M}_{S_\Fg}(A_\Fg\otimes S_\Fg)$.

The $S_\Fg$-coaction on $A_\Fg$ is reduced since it is by definition implemented by the unitary $V_\Fg$. From the definition of $\Delta_{A_\Fg}$ we have 
\[[(\id_{A_\Fg}\otimes \rho_\Fg)\left(\Delta_{A_\Fg}(A_\Fg)\right)\cdot 1_{A_\Fg}\otimes\rho_\Fg(S)]=\]
\[=[V_{\Fg,23} \cdot\lambda_B^\Fg(B)_{12}\cdot 1_B\otimes\rho_\Fg(S)\otimes 1_{H_S}\cdot V_{\Fg,23} ^*\cdot 1_{A_\Fg}\otimes\rho_\Fg(S)].\]
If we consider the closed operator space $[(\rho_\Fg(S)\otimes 1_{H_S}) V_{\Fg} ^*( 1_{H_S}\otimes\rho_\Fg(S))]$ the fact that $\Delta_\Fg(S)\cdot S\otimes 1$ is dense in $S\otimes S$ implies that 
\[[(\rho_\Fg(S)\otimes 1_{H_S}) V_{\Fg} ^*( 1_{H_S}\otimes \rho_\Fg(S))]=[V_\Fg^*(\rho_\Fg(S)\otimes\rho_\Fg(S))],\] 
and
\begin{align*}
&[(\id_{A_\Fg}\otimes \rho_\Fg)\left(\Delta_{A_\Fg}(A_\Fg)\right)\cdot 1_{A_\Fg}\otimes\rho_\Fg(S)]=\\
=&\,[V_{\Fg,23}\lambda_B^\Fg(B)_{12}V_{\Fg,23}^*\cdot 1_B\otimes\rho_\Fg(S)\otimes \rho_\Fg(S)].
\end{align*}
Therefore 
\begin{align*}
&V_{\Fg,23}^*[(\id_{A_\Fg}\otimes\rho_\Fg)\left(\Delta_{A_\Fg}(A_\Fg)\right)\cdot 1_{A_\Fg}\otimes\rho_\Fg(S)]V_{\Fg,23}=\\
=&\,V_{\Fg,23}^*[V_{\Fg,23}\lambda_B^\Fg(B)_{12}V_{\Fg,23}^*\cdot 1_B\otimes\rho_\Fg(S)\otimes \rho_\Fg(S)]V_{\Fg,23}=\\
=&\,[\lambda_B^\Fg(B)_{12}\cdot 1_B\otimes\rho_\Fg(S)\otimes \rho_\Fg(S)]=A_\Fg\otimes \rho_\Fg(S).
\end{align*}
Thus $[\Delta_{A_\Fg}(A_\Fg)\cdot 1_{A_\Fg}\otimes\rho_\Fg(S)]=A_\Fg\otimes\rho_\Fg(S)$ since  
\[[\Delta_{A_\Fg}(A_\Fg)\cdot 1_{A_\Fg}\otimes S]\subseteq A_\Fg \otimes S.\]
\end{proof}

Let us recall the definition of a regular quantum group from \cite{baskauni}. Regularity may be defined in terms of $S$ left regular corepresentation $W$. We let $\sigma$ denote the flip map on $H_S\otimes H_S$. Following Proposition $3.2$ of \cite{baskauni} we define the algebra
\[\mathcal{C}(W):=[\{\id\otimes \omega(\sigma W): \omega\in \Bo(H_S)_*\}].\]
If $\mathcal{C}(W)=\Ko(H_S)$ the locally compact quantum group $S$ is said to be regular.

\begin{deef}
\label{strongfeasi}
If $S(\Fg)$ is regular we say that $\Fg$ is a regular twist.
\end{deef}

Since every discrete quantum group is regular and admits Haar weights we have the following proposition:

\begin{prop}
\label{codireg}
If $S$ is discrete, every cocycle twist of $S$ is regular and feasible.
\end{prop}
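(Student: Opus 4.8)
The plan is to show that the twisted bi-$C^*$-algebra $S_\Fg$ is again a \emph{discrete} quantum group; feasibility and regularity will then both follow, the latter because every discrete quantum group is regular. Since $S$ is discrete its underlying $C^*$-algebra is a $c_0$-direct sum of matrix blocks, say $S=\bigoplus_\alpha \mathrm{M}_{n_\alpha}(\C)$, so that $\mathcal{M}(S)=\prod_\alpha \mathrm{M}_{n_\alpha}(\C)$ coincides with the von Neumann algebra $M$ generated by $S$ in its left regular representation, and, block by block, $\mathcal{M}(S\otimes S)=M\overline{\otimes}M$. In particular any cocycle twist $\Fg\in\mathcal{M}(S\otimes S)$ automatically lies in $M\overline{\otimes}M$. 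Twisting changes only the comultiplication and not the underlying $C^*$-algebra, and $\Delta_\Fg=Ad\,\Fg\circ\Delta$ is coassociative because the cocycle condition holds; moreover the density condition in Definition \ref{twistdef} is exactly the density requirement $S=[(\omega\otimes\id)\Delta_\Fg(a)]=[(\id\otimes\omega)\Delta_\Fg(a)]$ appearing in the definition of a reduced quantum group. Hence the only axiom left to verify is the existence of faithful left- and right-invariant approximate KMS weights for $\Delta_\Fg$.

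First I would dispose of the case in which $\Fg$ is a genuine twist, i.e.\ satisfies the strong cocycle identity. Then $\Fg\in M\overline{\otimes}M$ meets the hypotheses of Theorem \ref{decothm}, which produces Haar weights for $\Delta_\Fg$ on $M$ and hence the reduced quantum group $S(\Fg)\subseteq M$. Because $M=\mathcal{M}(S)$ is the product of the finite matrix blocks and the resulting Haar weight is finite on each block, its GNS completion is again the $c_0$-sum $\bigoplus_\alpha \mathrm{M}_{n_\alpha}(\C)=S_\Fg$, so $S(\Fg)=S_\Fg$ and $\Fg$ is feasible. For a general cocycle twist the strong identity fails, but the associator $\Phi_\Fg$ still commutes with $\Delta^{(2)}(S)$, so $\Delta_\Fg$ remains coassociative and equips the algebraic direct sum $\bigoplus_\alpha \mathrm{M}_{n_\alpha}(\C)$ with the structure of a multiplier Hopf $*$-algebra, the relevant cancellation maps being surjective, hence bijective on each finite block, by the density condition of Definition \ref{twistdef}. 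On a direct sum of finite matrix blocks the invariant functionals of such a structure are given blockwise by weighted traces, so one obtains a faithful, left-invariant approximate KMS weight $\phi_\Fg$ and, composing with the antipode of $S_\Fg$, a right-invariant weight $\psi_\Fg$; these are the Haar weights making $S_\Fg$ a reduced quantum group, so $\Fg$ is feasible in all cases.

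Finally, $S_\Fg$ is a reduced quantum group whose $C^*$-algebra is a $c_0$-direct sum of matrix algebras and is therefore discrete; since every discrete quantum group is regular, $S(\Fg)=S_\Fg$ is regular, which is precisely the assertion that $\Fg$ is a regular twist. The main obstacle is the feasibility step for a general, non-strong cocycle twist: Theorem \ref{decothm} does not apply, and one must instead work on the dense algebraic core $\bigoplus_\alpha \mathrm{M}_{n_\alpha}(\C)$, verifying both that the twisted comultiplication admits invariant functionals and that the $C^*$-completion of the resulting algebraic quantum group is exactly $S_\Fg$ rather than a proper subalgebra of $M$. Converting the purely algebraic left invariance into the analytic approximate-KMS condition is where the finiteness of each block — the discreteness of $S$ — does the essential work.
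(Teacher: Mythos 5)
Your proposal is correct and follows essentially the same route as the paper, which offers no proof beyond the sentence preceding the proposition: twisting leaves the underlying $c_0$-direct sum of matrix blocks unchanged, so $S_\Fg$ is again a discrete quantum group, and discrete quantum groups always admit Haar weights and are regular. The one point you gloss over --- that the slice-density condition of Definition \ref{twistdef} actually yields the cancellation/multiplier-Hopf structure needed to invoke Van Daele's existence theorem for invariant weights, rather than following from it --- is glossed over by the paper in exactly the same way (both here and again in Section 4).
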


Assuming regularity of a twist allows us to obtain a twisted Takesaki-Takai duality for regular quantum groups. The untwisted Takesaki-Takai duality was obtained in Theorem $7.5$ of \cite{baskauni}. It states that there is an equivariant isomorphism $A\rtimes_r  S\rtimes_r  \hat{S}\cong\Ko(H_S)\otimes A$. The Takesaki-Takai isomorphism does, in fact, behave well even if we allow a regular twist.

\begin{sats}[TTT-duality]
\label{tata}
Let $S$ be a regular, reduced locally compact quantum group and $\Fg$ a regular, feasible twist of $S$. If $B$ has a reduced, continuous $\hat{S}$-coaction and $A=B\rtimes_r \hat{S}$ there is a natural $*$-isomorphism 
\[A_\Fg \rtimes_r S_\Fg\cong A\rtimes_r S.\] 
\end{sats}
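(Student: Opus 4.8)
The plan is to realise both sides as concrete algebras of adjointable operators, on $A\otimes H_S$ and on $A_\Fg\otimes H_{S_\Fg}$ respectively, and to produce a single unitary---assembled from $\Fg$ and the modular data supplied by Theorem~\ref{decothm}---whose adjoint action carries $A_\Fg\rtimes_r S_\Fg$ onto $A\rtimes_r S$. Since $H_{S_\Fg}=H_S$ and $A_\Fg=Ad(1_B\otimes U_\Fg^{\rho})(A)$, both crossed products may be viewed on one common space. Each is a closed span of two families of generators: the base part $\lambda_A(A)$ (resp.\ $\lambda_{A_\Fg}(A_\Fg)$) and the dual part $1\otimes\hat\rho(\hat S)$ (resp.\ $1\otimes\hat\rho_\Fg(\widehat{S_\Fg})$). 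The isomorphism will be built so as to act trivially on the $B$-leg; the naturality in $B$ asserted in the statement will then be automatic.

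First I would push all twisted objects back to untwisted ones. Theorem~\ref{decothm} gives $W_\Fg=(\hat J_\Fg\otimes J)\Fg W^*(\hat J\otimes J)\Fg^*$; feeding this, together with $\hat W=\sigma W^*\sigma$ and $V=(J\hat J\otimes J\hat J)\hat W(\hat J J\otimes \hat J J)$ and their $\Fg$-analogues, through the definitions, one expresses $V_\Fg$ and $\hat\rho_\Fg$ in terms of $V$, $\hat\rho$, the conjugations $J,\hat J,\hat J_\Fg$, and $\Fg$. The outcome I would aim at is that, once the intertwiner $U_\Fg^\rho$ already built into $A_\Fg$ and into $\rho_\Fg=Ad(U_\Fg^\rho)\circ\rho$ is accounted for, the unitary $V_\Fg$ implementing the twisted coaction differs from $V$ only by a copy of $\Fg$ sitting on the two $H_S$-legs.

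The core of the argument is then to conjugate the generators of $A_\Fg\rtimes_r S_\Fg$ by the candidate unitary and verify, leg by leg, that they land in $[\lambda_A(A)\cdot 1\otimes\hat\rho(\hat S)]$. The strong cocycle identity $\Fg_{23}(\id\otimes\Delta)(\Fg)=\Fg_{12}(\Delta\otimes\id)(\Fg)$ is precisely what makes the factors of $\Fg$ on the different legs compatible across the coassociativity of $\Delta_{A_\Fg}$, while the pentagon equations for $W$ and $V$ are used to commute those factors past $\lambda_A(A)$ and $\hat\rho(\hat S)$. I expect the main obstacle to be twofold and to live here: keeping honest track of which modular conjugation ($\hat J$ versus $\hat J_\Fg$) appears on each leg and checking that the superfluous ones cancel---this is exactly what forces the introduction of $U_\Fg^\rho$---and matching the dual factor $\hat\rho_\Fg(\widehat{S_\Fg})$, whose underlying product is the $\Fg$-deformed one, with $\hat\rho(\hat S)$. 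It is at these points that the strong twist condition, rather than the mere cocycle condition, is genuinely used.

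Finally I would invoke regularity to turn the equality of generating sets into an equality of $C^*$-algebras. Because the crossed products are defined as closed linear spans, surjectivity of the adjoint action requires $\mathcal{C}(W_\Fg)=\Ko(H_S)$, which is the hypothesis that $\Fg$ is a regular twist, i.e.\ that $S(\Fg)$ is regular; this guarantees both that $[\lambda_{A_\Fg}(A_\Fg)\cdot 1\otimes\hat\rho_\Fg(\widehat{S_\Fg})]$ is norm-closed under the relevant products and that its image is all of $[\lambda_A(A)\cdot 1\otimes\hat\rho(\hat S)]$. As a conceptual check one may instead identify $A\rtimes_r S=(B\rtimes_r\hat S)\rtimes_r S\cong\Ko(H_S)\otimes B$ through the untwisted Takesaki--Takai duality of \cite{baskauni} (with the roles of $S$ and $\hat S$ exchanged), and likewise $A_\Fg\rtimes_r S_\Fg\cong\Ko(H_{S_\Fg})\otimes B=\Ko(H_S)\otimes B$; but it is the explicit intertwiner above that delivers the naturality claimed in the theorem.
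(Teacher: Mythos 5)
Your overall strategy---conjugate by a unitary assembled from $W_\Fg$ and the modular conjugations supplied by Theorem \ref{decothm}, then invoke regularity---is the right one, and your closing ``conceptual check'' is in fact the paper's entire proof: the paper conjugates $A_\Fg\rtimes_r S_\Fg$ by $1\otimes(\hat{J}_\Fg\otimes\hat{J}_\Fg)W_\Fg(\hat{J}_\Fg\otimes\hat{J}_\Fg)$, identifies the image as $[\lambda_B'(B)\cdot(1\otimes\hat{\lambda}_\Fg(\hat{S}_\Fg)\lambda_\Fg(S))]$, collapses $[\hat{\lambda}_\Fg(\hat{S}_\Fg)\lambda_\Fg(S)]$ to $\Ko(H_{S_\Fg})$ using regularity of $S(\Fg)$, and then identifies $[\lambda_B'(B)\cdot 1\otimes\Ko]\cong B\otimes\Ko\cong B\rtimes_r\hat{S}\rtimes_r S=A\rtimes_r S$ by the untwisted Takesaki--Takai duality of \cite{baskauni}. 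So the argument you relegate to a final remark is the one that actually carries the theorem.

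The route you present as the core, by contrast, stalls at exactly the point you flag as the obstacle. The dual factors $\hat{\rho}_\Fg(\hat{S}_\Fg)$ and $\hat{\rho}(\hat{S})$ are in general genuinely different $C^*$-subalgebras of $\Bo(H_S)$---the dual of a twisted quantum group is not obtained from $\hat{S}$ by any inner perturbation (recall the paper's remark that a twist of a compact quantum group need not even remain compact)---so no unitary built from $\Fg$ and modular data will carry the one generating family onto the other while simultaneously respecting the $\lambda_A$-leg, and a ``leg by leg'' verification that conjugated generators land in $[\lambda_A(A)\cdot 1\otimes\hat{\rho}(\hat{S})]$ cannot be completed factorwise. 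The two crossed products only become comparable after the dual leg is multiplied against the $S$-leg, where regularity gives $[\hat{\lambda}_\Fg(\hat{S}_\Fg)\lambda_\Fg(S)]=\Ko(H_{S_\Fg})$ and $[\hat{\lambda}(\hat{S})\lambda(S)]=\Ko(H_S)$: only the closed spans of products correspond, not the individual factors. In other words, regularity is not a surjectivity technicality bolted on at the end; it is the mechanism by which the dual factors are matched at all. Naturality in $B$ is not lost by arguing through $B\otimes\Ko(H_S)$, since the composite isomorphism still acts trivially on the $B$-leg. With the emphasis inverted in this way your outline becomes the paper's proof.
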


Before we prove Theorem \ref{tata}, we need some notations. We will denote the dual quantum group of $S_\Fg$ by $\hat{S}_\Fg$ and the left regular representation of $\hat{S}_\Fg$ will be denoted by $\hat{\lambda}_\Fg$ and similarly the right regular representation of $\hat{S}_\Fg$ will be denoted by $\hat{\rho}_\Fg$. Let $W_\Fg$ denote the left regular corepresentation of $S_\Fg$ and denote the modular conjugation operator of the left Haar weight $\hat{\phi}_\Fg$ of $\hat{S}_{\Fg}$ by $\hat{J}_\Fg$.

\begin{proof}
Let us start by observing that 
\[A_\Fg \rtimes_r S_\Fg=[\tilde{\Fg}_{23}\lambda_B^\Fg(B)_{12}\tilde{\Fg}_{23}^*\cdot (1\otimes (\rho_\Fg\otimes\lambda_\Fg)\Delta_\Fg(S))\cdot (1\otimes 1\otimes \hat{\rho}_\Fg(\hat{S}_\Fg))],\]
where $\tilde{\Fg}:=\rho_\Fg\otimes\lambda_\Fg(\Fg)$.

Consider the $*$-homomorphism 
\[\kappa:=Ad(1\otimes ( \hat{J}_\Fg\otimes \hat{J}_\Fg) W_\Fg( \hat{J}_\Fg\otimes \hat{J}_\Fg)):A_\Fg\rtimes_r S_\Fg\to \mathcal{M}_{\Ko\otimes \Ko}(B\otimes \Ko\otimes \Ko),\]
whose image coincide with 
\[\im\kappa=[\lambda_B'(B)\cdot (1_{B\otimes H_S}\otimes \hat{\lambda}_\Fg(\hat{S}_\Fg)\lambda_\Fg(S))],\]
where 
\[\lambda'_B(b):=Ad(1\otimes (\hat{J}_\Fg\otimes \hat{J}_\Fg) W_\Fg(\hat{J}_\Fg\otimes \hat{J}_\Fg)\tilde{\Fg})(\lambda_B^\Fg(b)).\] 
However, Takesaki-Takai duality implies that 
\[\im \kappa=[\lambda_B'(B)\cdot 1_{B\otimes H_S}\otimes \Ko(H_{S_\Fg})]\cong B\otimes \Ko\cong B\rtimes_r \hat{S}\rtimes_r S=A\rtimes_r S.\]
\end{proof}

\section{Torsion-free discrete quantum groups}

In the paper \cite{mehom}, Meyer introduced the notion of torsion-free discrete quantum groups. When $\Gamma$ is a discrete group, $C_0(\Gamma)$ is torsion-free in the sense of Meyer if and only if $\Gamma$ is torsion-free. For compact groups $G$ the dual quantum group $\hat{G}$, or actually the bi-$C^*$-algebra $C^*(G)$, is torsion-free precisely when $G$ satisfies the Hodgkin condition, that is $G$ is connected with torsion-free fundamental group. As is shown in \cite{rosch}, there exists Künneth formulas and UCT:s in $KK^G$ for compact Lie groups $G$ satisfying the Hodgkin condition. So the notion of torsion-free discrete quantum groups simplifies much of the homological algebra for $C^*$-algebras. 

\begin{deef}[\cite{mehom}]
A discrete quantum group $S$ is torsion-free if every coaction of $\hat{S}$ on a finite-dimensional $C^*$-algebra $A$ is equivariantly Morita equivalent to direct sums of $\C$ with the trivial coaction.
\end{deef}

Suppose that $G$ is a compact, connected Lie group satisfying the Hodgkin condition. Let $T\subseteq G$ be a maximal torus of rank $n$ and $w$ the Weyl group. By \cite{stei}, the representation ring $R(T)$ is a free $R(G)$-module of rank $|w|$ if $G$ satisfies the Hodgkin condition. As is shown in \cite{nemetva},  the fact that $R(T)$ is a free $R(G)$-module implies that there exists a natural isomorphism $KK^G(A,\C^{|w|})\cong KK^T(A,\C)$. So the fact that the quantum group $\hat{G}$ is torsion-free follows from the classical result that $\hat{T}=\Z^n$ is torsion-free. \\

\begin{lem}
\label{twotimmor}
Suppose that $\Fg$ is a regular, feasible twist of the regular, reduced locally compact quantum group $S$. If $B$ has a continuous, reduced coaction of $\hat{S}$ there exist a $C^*$-algebra $B^\Fg$ with a continuous, reduced coaction of $\hat{S}_{\Fg}$ such that $B\cong B^\Fg$ as $C^*$-algebras and an equivariant Morita equivalence 
\[(B\rtimes_r \hat{S})_\Fg\rtimes_r S_\Fg\sim _M B^\Fg.\]
\end{lem}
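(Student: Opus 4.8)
The plan is to promote the $C^*$-isomorphism of Theorem~\ref{tata} to an equivariant Morita equivalence by following the dual coaction through the isomorphism $\kappa$ constructed in its proof. Write $A:=B\rtimes_r\hat{S}$ with its dual $S$-coaction, so that the left-hand side is $A_\Fg\rtimes_r S_\Fg$, carrying the dual $\hat{S}_\Fg$-coaction $\hat{\delta}$ produced by Proposition~\ref{contcro} for the quantum group $S_\Fg$ (implemented by the right regular corepresentation $V_\Fg$). In the proof of Theorem~\ref{tata} the map $\kappa=Ad(1\otimes(\hat{J}_\Fg\otimes\hat{J}_\Fg)W_\Fg(\hat{J}_\Fg\otimes\hat{J}_\Fg))$ identifies $A_\Fg\rtimes_r S_\Fg$ with $\im\kappa=[\lambda'_B(B)\cdot(1_{B\otimes H_S}\otimes\hat{\lambda}_\Fg(\hat{S}_\Fg)\lambda_\Fg(S))]$, and regularity of the twist gives $\hat{\lambda}_\Fg(\hat{S}_\Fg)\lambda_\Fg(S)=\Ko(H_{S_\Fg})$, whence $\im\kappa\cong\Ko(H_S)\otimes B$. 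The first step is thus to transport $\hat{\delta}$ along $\kappa$ to an $\hat{S}_\Fg$-coaction $\hat{\delta}'$ on $\im\kappa\cong\Ko(H_S)\otimes B$.

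Next I would extract $B^\Fg$ from this transported coaction. Since the identification $\im\kappa\cong\Ko(H_S)\otimes B$ is exactly the one realizing the untwisted Takesaki--Takai duality $A\rtimes_r S\cong\Ko(H_S)\otimes B$ used at the end of the proof of Theorem~\ref{tata}, the tensor factor complementary to $\Ko(H_S)$ is $B$ itself. The claim to establish is that $\hat{\delta}'$ respects this decomposition, being implemented on the $\Ko(H_S)$-leg by a corepresentation of $\hat{S}_\Fg$ and acting on the remaining factor through a continuous, reduced $\hat{S}_\Fg$-coaction $\delta_{B^\Fg}$. Granting this, I define $B^\Fg$ to be this complementary factor equipped with $\delta_{B^\Fg}$; because the factor is $B$, one obtains $B^\Fg\cong B$ as $C^*$-algebras, which is the precise sense in which the twist alters only the equivariant structure and not the underlying algebra.

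Finally, the canonical equivariant stabilization equivalence $\Ko(H_S)\otimes B^\Fg\sim_M B^\Fg$, implemented by the imprimitivity bimodule $H_S\otimes B^\Fg$, yields $A_\Fg\rtimes_r S_\Fg\sim_M B^\Fg$. I expect the main obstacle to be exactly the product-form claim of the previous paragraph: one must unwind the implementer $V_\Fg$ of $\hat{\delta}$ together with the conjugating unitaries $(\hat{J}_\Fg\otimes\hat{J}_\Fg)W_\Fg(\hat{J}_\Fg\otimes\hat{J}_\Fg)$ and $\tilde{\Fg}$ appearing in $\kappa$ and $\lambda'_B$, and use the pentagonal and commutation relations among $W_\Fg$, $V_\Fg$ and the modular conjugations of $\hat{S}_\Fg$ to separate the corepresentation on the $\Ko(H_S)$-leg from the coaction on the $B$-factor. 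Once $\hat{\delta}'$ is shown to be of this stabilized form, continuity and reducedness of $\delta_{B^\Fg}$ follow from those of $\hat{\delta}$ and the properties established in Proposition~\ref{contcoatw}, and the equivariance of the stabilization equivalence is then automatic.
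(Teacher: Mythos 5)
Your architecture is the same as the paper's: take $B^\Fg:=B$ as a $C^*$-algebra, transport the dual $\hat{S}_\Fg$-coaction through the TTT-isomorphism of Theorem \ref{tata} onto $B_s^\Fg:=B\otimes\Ko(H_{S_\Fg})$, show the transported coaction has stabilized product form, and finish with the (now equivariant) stabilization equivalence. The genuine gap is the step you yourself flag as the main obstacle: the product-form claim is only announced, and the route you sketch for proving it --- unwinding $\kappa$, $\tilde{\Fg}$ and the pentagon and modular-conjugation relations among $W_\Fg$ and $V_\Fg$ so as to ``separate'' the two legs --- is neither carried out nor the efficient way to do it. The paper closes this step with a short commutant argument. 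Since the transported coaction $\Delta_{B_s^\Fg}$ is implemented by $V_{\Fg,23}$, one sets
\[
\delta_\Fg(b):=V_{\Fg,23}^*\,\Delta_{B_s^\Fg}(b\otimes 1)\,V_{\Fg,23},
\]
and observes that for every $k\in\Ko(H_{S_\Fg})$
\[
\delta_\Fg(b)\cdot(1\otimes k\otimes 1)=V_{\Fg,23}^*\,\Delta_{B_s^\Fg}(b\otimes k)\,V_{\Fg,23}=(1\otimes k\otimes 1)\cdot\delta_\Fg(b),
\]
so $\delta_\Fg(b)$ commutes with $1\otimes\Ko(H_{S_\Fg})\otimes 1$ and is therefore of the form $\Delta_{B^\Fg}(b)_{13}$ for a unique $\Delta_{B^\Fg}(b)\in\mathcal{M}(B\otimes\hat{S}_\Fg)$. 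This $\Delta_{B^\Fg}$ is the desired coaction, and its continuity and reducedness are read off from those of $\Delta_{B_s^\Fg}$ (which come from Proposition \ref{contcro} applied to $S_\Fg$, rather than from Proposition \ref{contcoatw} as you cite). Finally, for your stabilization bimodule $H_{S_\Fg}\otimes B^\Fg$ to be equivariant you still need the identity $\Delta_{B_s^\Fg}(b\otimes k)=V_{\Fg,23}\bigl(\Delta_{B^\Fg}(b)_{13}\cdot 1\otimes k\otimes 1\bigr)V_{\Fg,23}^*$, i.e.\ that the transported coaction coincides with the one obtained from ordinary Takesaki--Takai duality for $(B^\Fg\rtimes_r\hat{S}_\Fg)\rtimes_r S_\Fg$; this is immediate from the definition of $\Delta_{B^\Fg}$ above, and it is precisely this identification that licenses the chain $(B\rtimes_r\hat{S})_\Fg\rtimes_r S_\Fg\cong(B^\Fg\rtimes_r\hat{S}_\Fg)\rtimes_r S_\Fg\sim_M B^\Fg$. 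In short: right plan, but the central claim must be established by the commutant/leg-numbering argument rather than by the pentagon computation you defer.
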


\begin{proof}
Letting $B^\Fg:=B$ as a $C^*$-algebra we know that $(B\rtimes_r \hat{S})_\Fg\rtimes_r S_\Fg\cong B^\Fg\otimes \Ko(H_{S_\Fg})$ by TTT-duality. If $B^\Fg$ may be given a coaction of $\hat{S}_{\Fg}$ such that the coaction on $(B\rtimes \hat{S})_\Fg\rtimes S_\Fg$ coincides with that of $B^\Fg\rtimes_r \hat{S}_\Fg\rtimes_r S_\Fg$, the statement of the lemma follows. Let $B_s^\Fg:=B^\Fg\otimes \Ko(H_{S_\Fg})$ with the coaction of $\hat{S}_{\Fg}$ induced from the TTT-isomorphism $B_s^\Fg\cong (B\rtimes_r \hat{S})_\Fg\rtimes_r S_\Fg$ from Theorem \ref{tata}. Letting $V_\Fg$ denote the right regular corepresentation of $S_{\Fg}$, the coaction on $B_s^\Fg$ is induced by $V_\Fg$. 

On the other hand, suppose that we have a continuous, reduced coaction $\Delta_{B^\Fg}$ of $\hat{S}_\Fg$ on $B^\Fg$. If we define $\tilde{B}_s^\Fg:=B^\Fg\otimes \Ko(H_{S_\Fg})$ with coaction induced from the Takesaki-Takai isomorphism $\tilde{B}_s^\Fg\cong (B^\Fg\rtimes_r \hat{S}_\Fg)\rtimes_r S_\Fg$ the coaction on $b\otimes k\in \tilde{B}_s^\Fg$ would be given by
\[\Delta_{\tilde{B}_s^\Fg}(b\otimes k)=
V_{\Fg,23} (\Delta_{B^\Fg} (b)_{13}\cdot 1_{B}\otimes k\otimes 1) V_{\Fg,23}^*,\]
see more in Theorem $7.5$ of \cite{baskauni}. Using this reasoning as motivation, we define the $*$-homomorphism $\delta_\Fg:B^\Fg\to \mathcal{M}(B_s^\Fg\otimes \hat{S}_\Fg)$ by 
\[\delta_\Fg (b)=V_{\Fg,23}^*\Delta_{B_s^\Fg}(b\otimes 1)V_{\Fg,23}.\]
The morphism $\delta_\Fg$ restricts to a coaction $\Delta_{B^\Fg}:B^\Fg\to \mathcal{M}_{\hat{S}_\Fg}(B^\Fg\otimes \hat{S}_\Fg)$ since for all $b\in B$, $k\in \Ko(H_{S_\Fg})$ we have the equality
\begin{align*}
\delta_\Fg (b)\cdot 1\otimes k\otimes 1 &=V_{\Fg,23}^*\Delta_{B_s^\Fg}(b\otimes 1)V_{\Fg,23}\cdot 1\otimes k\otimes 1=\\
=V_{\Fg,23}^*\Delta_{B_s^\Fg}(b\otimes k)V_{\Fg,23}&=1\otimes k\otimes 1\cdot V_{\Fg,23}^*\Delta_{B_s^\Fg}(b\otimes 1)V_{\Fg,23}=\\
&=1\otimes k\otimes 1\cdot\delta_\Fg (b).
\end{align*}
So $\delta_\Fg (b)\in (1\otimes \Ko(H_{S_\Fg})\otimes 1)'$, therefore it is of the form $\delta_\Fg (b)=\Delta_{B^\Fg} (b)_{13}$ for a unique element $\Delta_{B^\Fg} (b)\in \mathcal{M}(B\otimes \hat{S}_\Fg)$. Since $\Delta_{B_s^\Fg}$ is a reduced coaction $\Delta_{B^\Fg} (b)\in\mathcal{M}_{\hat{S}_\Fg}(B^\Fg\otimes \hat{S}_\Fg)$ and $\Delta_{B^\Fg}$ defines a reduced coaction on $B^\Fg$. The coaction $\Delta_{B^\Fg}$ is continuous since
\[[\Delta_{B^\Fg}(B^\Fg)_{13}\cdot 1\otimes \Ko\otimes 1\cdot 1\otimes 1\otimes \hat{S}_\Fg]=[\Delta_{B_s^\Fg}(B_s^\Fg)\cdot 1_{B_s^\Fg}\otimes \hat{S}_\Fg]=B\otimes \Ko\otimes \hat{S}_\Fg.\]

Let $B^\Fg$ be given the coaction $\Delta_{B^\Fg}$, it follows directly from the definition and the arguments above that $\Delta_{B_s^\Fg}=\Delta_{\tilde{B}_s^\Fg}$. Thus we have the following sequence of equivariant Morita equivalences
\[(B\rtimes_r \hat{S})_\Fg\rtimes_r S_\Fg\cong (B^\Fg\rtimes_r \hat{S}_\Fg)\rtimes_r S_\Fg \sim _M B.\]
\end{proof}

\begin{sats}
\label{twistdisc}
If $S$ is discrete, torsion-free and $\Fg$ a twist of $S$, $S_\Fg$ is also discrete, torsion-free. 
\end{sats}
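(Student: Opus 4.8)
The plan is to separate the two assertions: discreteness of $S_\Fg$ is almost immediate, while torsion-freeness is obtained by transporting finite-dimensional coactions back and forth between $\hat{S}$ and $\hat{S}_\Fg$ using Lemma \ref{twotimmor} and the inverse twist.

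First I would dispose of discreteness. By Proposition \ref{codireg} the twist $\Fg$ of the discrete quantum group $S$ is regular and feasible, so $S_\Fg=S(\Fg)$ has the \emph{same} underlying $C^*$-algebra as $S$, namely a $c_0$-direct sum of matrix algebras. Since a reduced quantum group is discrete precisely when its $C^*$-algebra is such a direct sum, $S_\Fg$ is discrete. Applying Proposition \ref{codireg} now to $S_\Fg$, every cocycle twist of $S_\Fg$ is again regular and feasible; in particular $\Fg^*$ is a regular feasible twist of $S_\Fg$, and by the Proposition stating that $\Fg^*$ is a feasible twist of $S(\Fg)$ with $S(\Fg)(\Fg^*)=S$ we have $S_\Fg(\Fg^*)=S$, so that $\widehat{S_\Fg(\Fg^*)}=\hat{S}$. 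This makes Lemma \ref{twotimmor} available in \emph{both} directions, for $(S,\Fg)$ and for $(S_\Fg,\Fg^*)$.

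For torsion-freeness I would read Lemma \ref{twotimmor} as a twisting operation $B\mapsto B^\Fg$ sending a continuous reduced $\hat{S}$-coaction to a continuous reduced $\hat{S}_\Fg$-coaction on the isomorphic algebra $B^\Fg\cong B$. The structural facts I would establish are: (i) this operation sends equivariant Morita equivalences to equivariant Morita equivalences, since through the Morita equivalence of Lemma \ref{twotimmor} it is assembled from the crossed products $\cdot\rtimes_r\hat{S}$ and $\cdot\rtimes_r S_\Fg$ and the inner twisting $Ad(1\otimes U_{\Fg}^{\hat{\rho}})$, each of which respects equivariant Morita equivalence; (ii) performing the operation for $\Fg$ and then for $\Fg^*$ returns the original coaction up to equivariant isomorphism, as it amounts to twisting $\Delta$ by $\Fg$ and then by $\Fg^*$ and $S_\Fg(\Fg^*)=S$; and (iii) it carries the trivial coaction on $\C$, hence on any $\C^n$, to the trivial coaction. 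Fact (iii) I would verify by a direct Takesaki-Takai computation: for the trivial $\hat{S}$-coaction one has $\C\rtimes_r\hat{S}\cong S$ with the regular $S$-coaction, and after twisting and forming $S_\Fg\rtimes_r S_\Fg$, Theorem \ref{tata} collapses the double crossed product to $\Ko(H_{S_\Fg})$ carrying a trivial induced $\hat{S}_\Fg$-coaction, so $(\C,\text{trivial})^\Fg\sim_M(\C,\text{trivial})$.

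Granting (i)--(iii), the theorem follows. Given a coaction of $\hat{S}_\Fg$ on a finite-dimensional $A$, applying the $\Fg^*$-twisting of Lemma \ref{twotimmor} to $(S_\Fg,\Fg^*)$ yields a coaction of $\widehat{S_\Fg(\Fg^*)}=\hat{S}$ on the finite-dimensional $A^{\Fg^*}\cong A$. Since $S$ is torsion-free, $A^{\Fg^*}$ is equivariantly Morita equivalent to a direct sum $(\C^n,\text{trivial})$, and twisting this equivalence back by $\Fg$ gives, using (i)--(iii),
\[A\cong (A^{\Fg^*})^\Fg\sim_M(\C^n,\text{trivial})^\Fg\sim_M(\C^n,\text{trivial})\]
as $\hat{S}_\Fg$-coactions, which is exactly the torsion-freeness of $S_\Fg$. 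The main obstacle I anticipate is fact (iii): identifying the twist of the trivial coaction with the trivial coaction requires tracking the induced $\hat{S}_\Fg$-coaction through the double crossed product of Lemma \ref{twotimmor}, not merely the underlying $C^*$-algebra; the functoriality in (i) is more routine, resting on the stability of equivariant Morita equivalence under crossed products and inner conjugation.
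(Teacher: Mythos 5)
Your proposal is correct and follows essentially the same route as the paper: both transport a finite-dimensional $\hat{S}_\Fg$-coaction to an $\hat{S}$-coaction via the double-crossed-product twisting operation of Lemma \ref{twotimmor} applied to the inverse twist $\Fg^*$, invoke torsion-freeness of $S$, and twist back by $\Fg$, checking that trivial coactions are preserved. The paper simply writes this as one explicit chain of equivariant Morita equivalences, $A\sim_M((A\rtimes_r\hat{S}_\Fg)_{\Fg^*})_\Fg\rtimes_r S_\Fg\sim_M\tilde{A}_\Fg\rtimes_r S_\Fg\sim_M\C^k$, rather than isolating your properties (i)--(iii) as separate facts.
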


\begin{proof}
The property of being discrete is invariant under twist. Suppose that $A\in C^*_{\hat{S}_\Fg}$ is finite-dimensional. Consider the object $\tilde{A}:=(A\rtimes_r \hat{S}_\Fg)_{\Fg^*}\in C^*_S$. Since $A$ is of finite dimension, Lemma \ref{twotimmor} implies that there is an equivariant Morita equivalence from $\tilde{A}\rtimes_r S$ to a finite-dimensional $C^*$-algebra. Since $S$ is torsion-free,  $\tilde{A}\rtimes_r S$ is Morita equivalent to $\C^k$ for some $k$. Therefore Takesaki-Takai duality implies that $\tilde{A}_\Fg\sim _M (\C^k\rtimes \hat{S})_\Fg$. Lemma \ref{twotimmor} implies that $\tilde{A}_\Fg\rtimes_r S_\Fg\sim \C^k$ since any action on $\C$ is trivial. Collecting all these results, we obtain that
\[A\sim_M ((A\rtimes_r \hat{S}_\Fg)_{\Fg^*})_\Fg\rtimes_r S_\Fg\sim_M\tilde{A}_\Fg\rtimes _r S_\Fg\sim_M\C^k.\]
\end{proof}

In \cite{mehom} the question was posed whether duals of the Woronowicz deformations are torsion-free? Theorem \ref{twistdisc} does unfortunately not answer the question since the Drinfeld-Jimbo twists are not twists. The torsion of the duals of the Woronowicz deformations must be dealt with care, the proof that duals of the Woronowicz deformations are torsion-free uses the generators of the Drinfeld-Jimbo algebra explicitly. This indicates that the torsion a general cocycle twist may produce might require some heavier machinery to be dealt with.

\section{Drinfeld-Jimbo twists}
The aim of this section is to give the structure of a locally compact quantum group to the Drinfeld-Jimbo quantization of a Lie algebra and describe it as a feasible twist of the group $C^*$-algebra of the Lie group. The Drinfeld-Jimbo twists have been studied from the viewpoint of  operator algebras in \cite{netu} and in \cite{rosso} the Drinfeld-Jimbo twists were proven to be dual to the Woronowicz deformations. Our aim is to construct a locally compact quantum group $\hat{G}_q$ such that its finite dimensional modules are weight modules of the Drinfeld-Jimbo quantization $\mathcal{U}_q(\mathfrak{g})$, which are twists of a classical $G$-module. The construction is very much inspired by the notes \cite{netu} and we follow their notations. The Drinfeld-Jimbo algebra $\mathcal{U}_q(\mathfrak{g})$ is constructed for $q>0$ as a deformation of the complex universal enveloping algebra $\mathcal{U}(\mathfrak{g})$ where $\mathfrak{g}$ is the complexified Lie algebra of $G$. To read more about the algebra $\mathcal{U}_q(\mathfrak{g})$ and its representation theory, see \cite{klimsch}. We will recall the definition of $\mathcal{U}_q(\mathfrak{g})$. 

Denote the Cartan matrix of $\mathfrak{g}$ associated with the Cartan subalgebra $\mathfrak{h}$ by $A=(a_{ij})$. We take coprime integers $d_1,\ldots, d_n$ such that the matrix $(d_ia_{ij})$ is symmetric. For a number $q>0$ we define $q_i:=q^{d_i}$. We set $\mathcal{U}_1(\mathfrak{g}):=\mathcal{U}(\mathfrak{g})$ and if $q\neq 1$ the algebra  $\mathcal{U}_q(\mathfrak{g})$ is generated by the symbols $E_i,F_i,K_i,K_i^{-1}$, for $1\leq i\leq n$, satisfying the relations
\begin{equation}
\label{treett}
K_iK_i^{-1}=K_i^{-1}K_i=1, \quad K_iK_j=K_jK_i, \quad K_iE_jK_i^{-1}=q_i^{a_{ij}}E_j, 
\end{equation}
\begin{equation}
\label{tretva}
K_iF_jK_i^{-1}=q_i^{-a_{ij}}F_j,\quad [E_i,F_j] = \delta_{ij}\frac{K_i-K_i^{-1}}{q_i-q_i^{-1}}, 
\end{equation}
\begin{equation}
\label{tretre}
\sum_{k=0}^{1-a_{ij}} (-1)^k
\begin{pmatrix} 1-a_{ij}\\
k \end{pmatrix} _{q_i} E_i^{1-a_{ij}-k}E_jE_i^k=0, 
\end{equation}
\begin{equation}
\label{trefyr}
\sum_{k=0}^{1-a_{ij}} (-1)^k
\begin{pmatrix} 1-a_{ij}\\
k \end{pmatrix} _{q_i} F_i^{1-a_{ij}-k}F_jF_i^k=0.
\end{equation}
The algebra  $\mathcal{U}_q(\mathfrak{g})$ forms a $*$-Hopf algebra with comultiplication
\[\Delta_q(K_i):=K_i\otimes K_i, \quad \Delta_q(E_i):=E_i\otimes 1 +K_i\otimes E_i, \quad \Delta_q(F_i):=F_i\otimes K_i^{-1} + F_i \otimes 1,\]
counit $\epsilon _q:\mathcal{U}_q(\mathfrak{g})\to \C$ defined by $\epsilon_q(E_i)=\epsilon_q(F_i)=0$, $\epsilon _q(K_i)=1$ and the $*$-structure on  $\mathcal{U}_q(\mathfrak{g})$ is given by 
\begin{equation}
\label{trefem}
K_i^*=K_i, \quad E_i^*=K_iF_i \quad \mbox{and} \quad F_i^*=E_iK_i^{-1}.
\end{equation}

To use the notation from \cite{netu}, we let $P\subseteq \mathfrak{h}^*$ denote the set of integral weights. Given a finite dimensional $\mathcal{U}_q(\mathfrak{g})$-module $V$  and a weight $\lambda \in P$ we let $V(\lambda)$ denote the subspace of vectors in $V$ of weight $\lambda$. If $V$ splits as a direct sum of weight modules $V=\oplus_{\lambda\in P}V(\lambda)$ we call $V$ an admissible $\mathcal{U}_q(\mathfrak{g})$-module. The category of admissible $\mathcal{U}_q(\mathfrak{g})$-modules forms a semisimple category with simple generators indexed by dominant integral weights $\lambda \in P_+$. Just as in \cite{netu} we fix  a simple generator $V_\lambda^q$ to each $\lambda\in P_+$.

We let $P_G$ denote the set of weights whose representations $\pi_\lambda$ integrates to a finite-dimensional, unitary representation of $G$ and define the algebra
\[\widehat{\C[G_q]}:=\bigoplus_{\lambda\in P_+\cap P_G} End(V_\lambda^q).\]
This definition differs somewhat from the definition in \cite{netu} which produces the simply connected covering of $\widehat{\C[G_q]}$. The simply connected covering is defined in \cite{netu} as 
\[\widehat{\C[\tilde{G}_q]}:=\bigoplus_{\lambda\in P_+} End(V_\lambda^q).\]

There is a canonical projection $p_G:\widehat{\C[\tilde{G}_q]}\to \widehat{\C[G_q]}$. The multiplier Hopf $*$-algebra structure on $\mathcal{U}_q(\mathfrak{g})$ induces a multiplier Hopf $*$-algebra structure on $\widehat{\C[\tilde{G}_q]}$, since $\mathcal{U}_q(\mathfrak{g})$ is dense in the algebraic multiplier algebra of $\widehat{\C[\tilde{G}_q]}$ equipped with the direct product topology. A finite-dimensional $\widehat{\C[\tilde{G}_q]}$-module correspond to the admissible $\mathcal{U}_q(\mathfrak{g})$-modules, so the algebra $\ker \;p_G$ is a Hopf $*$-ideal and the algebra $\widehat{\C[G_q]}$ also forms a multiplier Hopf $*$-algebra. Let $\Delta_q$ denote the comultiplication of $\widehat{\C[G_q]}$.

The definition of $\widehat{\C[G_q]}$ is motivated by the classical limit in which $\widehat{\C[G]}$ is the convolution algebra of finite dimensional representations of $G$ because of the Peter-Weyl theorem. In the classical limit $q=1$ the multiplier Hopf algebra $\widehat{\C[\tilde{G}]}$ corresponds to the convolution algebra of finite dimensional representations of the simply connected covering group $\tilde{G}$. The $*$-algebra $\widehat{\C[G]}$ is a multiplier Hopf algebra; let $\Delta$ denote its comultiplication. The next theorem is a summary of the results in \cite{netu} relating the two bi-algebras $\widehat{\C[\tilde{G}_q]}$ and $\widehat{\C[\tilde{G}]}$.

\begin{sats}[\cite{netu}]
\label{twistnetu}
For $q>0$ there is a  $*$-isomorphism 
\[\tilde{\phi}:\widehat{\C[\tilde{G}_q]}\to \widehat{\C[\tilde{G}]}\] 
extending the identification of the centers of $\widehat{\C[\tilde{G}_q]}$ and $\widehat{\C[\tilde{G}]}$ and a unitary $\tilde{\Fg}_q \in \prod_{\lambda, \mu\in P_+} End(V_\lambda\otimes V_\mu)$  satisfying the cocycle condition in Definition \ref{twistdef} and 
\[(\tilde{\phi}\otimes\tilde{ \phi}) \circ\tilde{ \Delta}_q=Ad \tilde{\Fg}_q \circ \tilde{\Delta} \circ \tilde{\phi},\]
where $\tilde{\Delta} _q$ denotes the comultiplication of $\widehat{\C[\tilde{G}_q]}$ and $\tilde{\Delta}$ the comultiplication of $\widehat{\C[\tilde{G}]}$. The associator of $\tilde{\Fg}_q$ coincides with the Drinfeld associator $\Phi_{KZ}$.
\end{sats}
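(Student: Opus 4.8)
The plan is to build $\tilde\phi$ and $\tilde\Fg_q$ block by block over $P_+$ and to reduce the cocycle identity and the associator claim to coassociativity and to Drinfeld's monodromy theorem, respectively. First I would record the representation-theoretic input: for every $q>0$ the category of admissible $\mathcal{U}_q(\mathfrak{g})$-modules is semisimple with simple objects $V_\lambda^q$ indexed by $P_+$, and the quantum Weyl character formula (Lusztig's theory) yields $\dim V_\lambda^q=\dim V_\lambda$ for each $\lambda$. Choosing a Hilbert space unitary $u_\lambda:V_\lambda^q\to V_\lambda$ for each $\lambda$ and setting $\tilde\phi:=\bigoplus_\lambda Ad(u_\lambda)$ produces a block-preserving $*$-isomorphism $\widehat{\C[\tilde{G}_q]}\to\widehat{\C[\tilde{G}]}$. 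Since each center equals $\prod_{\lambda\in P_+}\C\,1_{V_\lambda}$ and $\tilde\phi$ fixes the $\lambda$-labelling, it extends the canonical identification of the two centers, as required.

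Next I would transport the coproduct. The map $\Delta':=(\tilde\phi\otimes\tilde\phi)\circ\tilde\Delta_q\circ\tilde\phi^{-1}$ is a coassociative comultiplication on $\widehat{\C[\tilde{G}]}$, and both $\tilde\Delta$ and $\Delta'$ furnish, for each pair $(\lambda,\mu)$, a finite-dimensional $*$-representation of $\widehat{\C[\tilde{G}]}$ on $V_\lambda\otimes V_\mu$, with multiplicities given by the classical and quantum fusion numbers $N^\nu_{\lambda\mu}$ and $N^{\nu,q}_{\lambda\mu}$. The coincidence of quantum and classical characters forces $N^\nu_{\lambda\mu}=N^{\nu,q}_{\lambda\mu}$, so these two $*$-representations on the common space $V_\lambda\otimes V_\mu$ have equal multiplicities and are therefore unitarily equivalent. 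Picking, for each $(\lambda,\mu)$, a unitary $(\tilde\Fg_q)_{\lambda\mu}\in\End(V_\lambda\otimes V_\mu)$ that implements this equivalence and assembling them yields a unitary $\tilde\Fg_q\in\prod_{\lambda,\mu\in P_+}\End(V_\lambda\otimes V_\mu)$ with $\Delta'=Ad\,\tilde\Fg_q\circ\tilde\Delta$; this is precisely the asserted identity $(\tilde\phi\otimes\tilde\phi)\circ\tilde\Delta_q=Ad\,\tilde\Fg_q\circ\tilde\Delta\circ\tilde\phi$.

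The cocycle condition is then automatic: $\Delta'=Ad\,\tilde\Fg_q\circ\tilde\Delta$ is coassociative, being conjugate to $\tilde\Delta_q$, and the discussion following Definition \ref{twistdef} records that coassociativity of $Ad\,\tilde\Fg_q\circ\tilde\Delta$ is equivalent to $\Phi_{\tilde\Fg_q}$ commuting with $\tilde\Delta^{(2)}(\widehat{\C[\tilde{G}]})$, i.e. to the cocycle condition. The final and deepest claim, that $\Phi_{\tilde\Fg_q}$ equals the Knizhnik-Zamolodchikov associator $\Phi_{KZ}$, I would invoke rather than reprove: it is the Kohno-Drinfeld theorem identifying the associator relating $\mathcal{U}_q(\mathfrak{g})$ and $\mathcal{U}(\mathfrak{g})$ with the monodromy of the KZ connection. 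I expect this to be the main obstacle. It is also the point at which the many choices above must be fixed, since the block unitaries $u_\lambda$ and $(\tilde\Fg_q)_{\lambda\mu}$ determine $\tilde\Fg_q$ only up to gauge equivalence---of which $\tilde\Fg_q'=\Delta'(U)\tilde\Fg_q$ is the instance recorded after Definition \ref{twistdef}---so landing on $\Phi_{KZ}$ on the nose requires the canonical braided data of $\mathcal{U}_q(\mathfrak{g})$ rather than an arbitrary unitary equivalence. The clean route, consistent with the attribution, is to take this last identification from \cite{netu}.
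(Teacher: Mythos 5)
Your proposal is sound as far as it goes, but note first that the paper contains no proof of this statement: Theorem \ref{twistnetu} is introduced as ``a summary of the results in \cite{netu}'', so the paper's own argument is the citation itself. Your reconstruction of the soft parts is correct and genuinely more explicit. Equality of classical and quantum characters for $q>0$ gives $\dim V_\lambda^q=\dim V_\lambda$, whence a block-preserving $*$-isomorphism $\tilde{\phi}=\bigoplus_\lambda Ad(u_\lambda)$ which automatically extends the identification of the centers $\prod_\lambda \C\, 1_{V_\lambda}$; equality of the fusion multiplicities $N^{\nu,q}_{\lambda\mu}=N^{\nu}_{\lambda\mu}$ (again by characters) makes the two finite-dimensional $*$-representations $(\pi_\lambda\otimes\pi_\mu)\circ\tilde{\Delta}$ and $(\pi_\lambda\otimes\pi_\mu)\circ(\tilde{\phi}\otimes\tilde{\phi})\circ\tilde{\Delta}_q\circ\tilde{\phi}^{-1}$ of $\widehat{\C[\tilde{G}]}$ on $V_\lambda\otimes V_\mu$ unitarily equivalent, which assembles into the unitary $\tilde{\Fg}_q$; and, as the discussion after Definition \ref{twistdef} records, the cocycle condition is equivalent to coassociativity of $Ad\,\tilde{\Fg}_q\circ\tilde{\Delta}$, which is inherited from $\tilde{\Delta}_q$. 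What this elementary route cannot deliver is the final sentence of the theorem: an arbitrary family of intertwiners pins down $\tilde{\Fg}_q$ only up to the gauge freedom you yourself point out, and the associator of such a choice is merely some unitary commuting with $\tilde{\Delta}^{(2)}(\widehat{\C[\tilde{G}]})$, not $\Phi_{KZ}$ on the nose. You correctly identify this as the deep step and defer it to the Kazhdan--Lusztig/Drinfeld--Kohno equivalence in \cite{netu}; since that is precisely what the paper does for the entire theorem, your attempt is consistent with the paper and adds useful detail, with the understanding that the associator identification is imported rather than proved.
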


The results from \cite{netu} can be applied to an arbitrary semisimple, compact Lie group. Let us first look at the algebraic statement, and then define the $C^*$-algebraic quantum group.

\begin{sats}
\label{djtwtva}
Let $G$ be a semisimple, compact, connected Lie group and $q>0$. There exist a unitary $\Fg_q \in \prod_{\lambda, \mu\in P_+\cap P_G} End(V_\lambda\otimes V_\mu)$ satisfying the cocycle condition in Definition \ref{twistdef} and a $*$-isomorphism $\phi:\widehat{\C[G_q]}\to \widehat{\C[G]}$ such that 
\[(\phi\otimes \phi) \circ \Delta_q=Ad \Fg_q \circ \Delta \circ \phi.\]
\end{sats}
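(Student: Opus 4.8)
The plan is to derive everything from the simply-connected case, Theorem \ref{twistnetu}, by descending along the projection $p_G$. Write $G=\tilde{G}/Z$ with $Z\subseteq \tilde{G}$ a finite central subgroup; the weights $\lambda\in P$ whose representation descends to $G$ are exactly those lying in $P_G$, the sublattice of characters trivial on $Z$. This sublattice does not depend on $q$ and contains the root lattice $Q$. Since every weight occurring in $V_\lambda^q\otimes V_\mu^q$ lies in $\lambda+\mu+Q$, the index set $P_+\cap P_G$ is closed under fusion: if $\lambda,\mu\in P_+\cap P_G$ then every simple constituent $V_\nu^q$ of $V_\lambda^q\otimes V_\mu^q$ has $\nu\in P_+\cap P_G$. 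Consequently both the quantum projection $p_G\colon\widehat{\C[\tilde{G}_q]}\to\widehat{\C[G_q]}$ and its classical analogue $p_G^1\colon\widehat{\C[\tilde{G}]}\to\widehat{\C[G]}$ are surjective Hopf $*$-algebra morphisms, each being the projection onto the blocks indexed by the \emph{same} set $P_+\cap P_G$, and by construction
\[(p_G\otimes p_G)\circ\tilde{\Delta}_q=\Delta_q\circ p_G,\qquad (p_G^1\otimes p_G^1)\circ\tilde{\Delta}=\Delta\circ p_G^1.\]

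Next I would descend the isomorphism. Because $\tilde{\phi}$ extends the identification of the centers of $\widehat{\C[\tilde{G}_q]}$ and $\widehat{\C[\tilde{G}]}$, and the center of a finite direct sum of matrix algebras is the product of its block centers, $\tilde{\phi}$ respects the block decompositions: it maps $End(V_\lambda^q)$ onto $End(V_\lambda)$ for every $\lambda\in P_+$. In particular it carries $\ker p_G=\bigoplus_{\lambda\in P_+\setminus P_G}End(V_\lambda^q)$ onto $\ker p_G^1$, so it descends to a $*$-isomorphism $\phi\colon\widehat{\C[G_q]}\to\widehat{\C[G]}$ characterized by $\phi\circ p_G=p_G^1\circ\tilde{\phi}$.

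The twist itself I would obtain by compression. Since $\tilde{\Fg}_q\in\prod_{\lambda,\mu\in P_+}End(V_\lambda\otimes V_\mu)$ is block-diagonal, I set $\Fg_q:=(p_G^1\otimes p_G^1)(\tilde{\Fg}_q)$, i.e. the subfamily of $\tilde{\Fg}_q$ indexed by $(\lambda,\mu)\in(P_+\cap P_G)^2$; this lies in $\prod_{\lambda,\mu\in P_+\cap P_G}End(V_\lambda\otimes V_\mu)$ and is unitary because unitarity is a blockwise condition. Block-diagonality of $\tilde{\Fg}_q$ means that the adjoint action $Ad\,\tilde{\Fg}_q$ commutes with the block projection $p_G^1\otimes p_G^1$ and that $Ad\,\Fg_q$ is the restriction of $Ad\,\tilde{\Fg}_q$ to the $(P_+\cap P_G)^2$ blocks. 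Applying $p_G^1\otimes p_G^1$ to the relation of Theorem \ref{twistnetu}, and using the identities above together with the surjectivity of $p_G$, then yields in a short computation
\[(\phi\otimes\phi)\circ\Delta_q=Ad\,\Fg_q\circ\Delta\circ\phi,\]
which is the required relation.

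It remains to see that $\Fg_q$ satisfies the cocycle condition of Definition \ref{twistdef}, and this is the step I expect to need the most care. By Theorem \ref{twistnetu} the associator $\Phi_{\tilde{\Fg}_q}$ equals the Drinfeld associator $\Phi_{KZ}$ and hence commutes with $\tilde{\Delta}^{(2)}(\widehat{\C[\tilde{G}]})$. Because $P_+\cap P_G$ is fusion-closed, the triple block projection $p_G^1\otimes p_G^1\otimes p_G^1$ intertwines $\tilde{\Delta}^{(2)}$ with $\Delta^{(2)}$ and sends $\Phi_{\tilde{\Fg}_q}$ to the associator $\Phi_{\Fg_q}$ of the compressed twist; projecting the commutation relation for $\Phi_{\tilde{\Fg}_q}$ therefore shows that $\Phi_{\Fg_q}$ commutes with $\Delta^{(2)}(\widehat{\C[G]})$, which is exactly the cocycle condition for $\Fg_q$. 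The crux throughout is the fusion-closedness of $P_+\cap P_G$ together with the block-diagonality of $\tilde{\Fg}_q$: these are what guarantee that every Hopf-algebraic identity on the simply-connected side restricts consistently to the $G$-blocks, so that no genuinely new computation beyond Theorem \ref{twistnetu} is required.
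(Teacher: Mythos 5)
Your proof is correct and takes essentially the same route as the paper: descend $\tilde{\phi}$ along the central quotient to obtain $\phi$ via the commutative square with the block projections, and compress $\tilde{\Fg}_q$ to the blocks indexed by $P_+\cap P_G$. The paper's own proof is a three-line sketch of exactly this; your fusion-closedness argument for $P_+\cap P_G$ (via $\nu\in\lambda+\mu+Q$ and $Q\subseteq P_G$) is precisely the detail the paper dismisses with ``clearly it restricts,'' so nothing differs in substance.
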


\begin{proof}
The $*$-isomorphism $\tilde{\phi}$ is an extension of the identification of the centers of $\widehat{\C[\tilde{G}_q]}$ and $\widehat{\C[\tilde{G}]}$. Therefore there exists a commutative diagram
\[ \begin{CD}
\widehat{\C[\tilde{G}_q]} @>\tilde{\phi}>> \widehat{\C[\tilde{G}]} \\
 @VVV @ VVV\\
\widehat{\C[G_q]} @>\phi>>  \widehat{\C[G]} \\
\end{CD}
 \]
Since $\tilde{\Fg}_q$ maps tensor products of weight modules to themselves, clearly it restricts to a $\Fg_q \in \prod_{\lambda, \mu\in P_+\cap P_G} End(V_\lambda\otimes V_\mu)$. Thus all properties of $\Fg_q$ and $\phi$ follow from those of $\tilde{\Fg}_q$ and  $\tilde{\phi}$. 
\end{proof}

The element $\Fg_q\in \prod_{\lambda, \mu\in P_+\cap P_G} End(V_\lambda\otimes V_\mu)$ is unitary, so it extends to a bounded multiplier $\Fg_q\in \mathcal{M}(C^*(G)\otimes C^*(G))$. Since it satisfies the cocycle condition the $*$-homomorphism $\Delta_q:=Ad(\Fg_q)\circ \Delta_{C^*(G)}$ defines a comultiplication on $C^*(G)$ which by Theorem $5.3$ of \cite{vandaeledisc} admits invariant Haar weights. Thus, for any compact, semisimple Lie group $G$ we can define the discrete quantum group $C^*(G_q)$ as the $C^*$-algebra $C^*(G)$ with comultiplication twisted by the feasible twist $\Fg_q$:
\[\Delta_q(a):=\Fg_q(\Delta_{C^*(G)}(a))\Fg_q^*.\]
The compact quantum group $C(G_q)$ is defined as the quantum dual of $C^*(G_q)$.

Let us show that the Drinfeld-Jimbo twist is torsion-free. The proof is based on the case of $SU_q(2)$ which was proven in \cite{voigtfree}.  We will include the proof of the case $SU_q(2)$ for the sake of completeness.

\begin{sats}
\label{djtorfree}
If $G$ is a simply connected, compact semisimple Lie group the Drinfeld-Jimbo twists $\hat{G}_q$ are torsion-free for $q>0$. 
\end{sats}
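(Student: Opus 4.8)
The plan is to prove this categorically rather than through the Takesaki--Takai machinery of Section 3, since $\Fg_q$ is a genuine cocycle twist --- its associator is the nontrivial Drinfeld associator $\Phi_{KZ}$ --- so that Theorem \ref{twistdisc} does not apply. The starting point is that torsion-freeness of a discrete quantum group $S$ refers only to coactions of $\hat S$ on finite-dimensional $C^*$-algebras up to equivariant Morita equivalence, and, after passing to the underlying ergodic pieces, such coactions are governed by the finite semisimple module categories over the tensor category $\mathfrak{C}$ of finite-dimensional corepresentations of $\hat S$. I would therefore first record the key lemma that torsion-freeness is invariant under monoidal equivalences of the representation category of the compact dual, reducing the problem to understanding $\mathfrak{C}_{\Fg_q}=\mathrm{Rep}(C(G_q))$ up to monoidal equivalence.

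Next I would exploit the cocycle description from Section 2. By construction $C(G_q)$ is $C^*(G)$ with comultiplication $\Delta_q=Ad(\Fg_q)\circ\Delta$, and, as explained after Definition \ref{twistdef}, the category $\mathfrak{C}_{\Fg_q}$ is just $\mathfrak{C}=\mathrm{Rep}(\hat G)$ equipped with the twisted tensor product and the associator $\Phi_{\Fg_q}=\Phi_{KZ}$. The identity functor, carrying the tensor constraint $\Fg_q$, is then a monoidal functor $\mathfrak{C}_{\Fg_q}\to\mathfrak{C}$: the compatibility of this constraint with the two associators is precisely the identity that defines $\Phi_{\Fg_q}$, so the functor is a monoidal \emph{equivalence} even though $\Phi_{KZ}\neq 1$ --- monoidal equivalence requires only coherent constraints, not strictness. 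Consequently $\mathrm{Rep}(C(G_q))$ and $\mathrm{Rep}(\hat G)$ are monoidally equivalent, and by the lemma of the first step torsion-freeness of $\hat G_q$ becomes equivalent to torsion-freeness of $\hat G$.

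It then remains to supply the input for the untwisted group. Since $G$ is simply connected it is connected with trivial, hence torsion-free, fundamental group, so it satisfies the Hodgkin condition and $\hat G$ is torsion-free by the discussion opening Section 3, which reduces through the freeness of $R(T)$ over $R(G)$ to the torus $\hat T=\Z^n$. The concrete base of the whole reduction is the rank-one case: for $\mathfrak{g}=\mathfrak{sl}_2$ the dual of $SU_q(2)$ is torsion-free by \cite{voigtfree}, and this is the instance one verifies by hand and from which the simple factors of a general simply connected $G$ are assembled, the simply-connected hypothesis guaranteeing that the weight lattice and the fusion ring carry no extra torsion.

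The main obstacle I expect is the first step: making precise, and transporting across a non-strict monoidal equivalence with nontrivial associator $\Phi_{KZ}$, the statement that equivariant Morita equivalence classes of finite-dimensional coactions are a monoidal invariant of $\mathfrak{C}$. One must show that a finite-dimensional coaction of $C(G_q)$ yields a module category (equivalently a connected $Q$-system) in $\mathfrak{C}_{\Fg_q}$, that equivariant Morita equivalence corresponds to equivalence of these, and that reassociation by $\Phi_{KZ}$ produces no indecomposable modules beyond those pulled back from $\mathrm{Rep}(\hat G)$. Establishing this compatibility, together with the assembly of the simple factors from the $SU_q(2)$ building block of \cite{voigtfree}, is where the real work lies; once it is in place, the classical Hodgkin computation is purely formal.
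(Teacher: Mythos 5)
There is a genuine gap, and it is fatal to the reduction at the heart of your argument. Your second step asserts that the identity functor equipped with the tensor constraint $\Fg_q$ is a monoidal equivalence between $\mathrm{Rep}(C(G_q))$ and $\mathrm{Rep}(C(G))$. It is not. The coherence condition for a monoidal functor forces that functor to land in the monoidal category whose associator is $\Phi_{\Fg_q}=\Phi_{KZ}$, i.e.\ in the Drinfeld category $(\mathrm{Rep}(G),\otimes,\Phi_{KZ})$ --- that is exactly what Theorem \ref{twistnetu} (after \cite{netu}) says. The representation category of the compact quantum group $C(G_q)$, by contrast, is $(\mathrm{Rep}(G),\otimes_{\Fg_q},1)$ with the \emph{trivial} associator, since $\Delta_q$ is coassociative on the nose; only for a genuine twist (where $\Phi_\Fg=1$) do the two coincide. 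And indeed $\mathrm{Rep}(C(G_q))$ cannot be monoidally equivalent to $\mathrm{Rep}(G)$ for $q\neq 1$: unitary monoidal equivalence of rigid $C^*$-tensor categories preserves intrinsic dimension, and already for $\mathfrak{g}=su(2)$ the fundamental corepresentation has quantum dimension $q+q^{-1}>2$, while every object of $\mathrm{Rep}(SU(2))$ has integer intrinsic dimension. This is precisely why the paper stresses that Theorem \ref{twistdisc} does \emph{not} settle the Drinfeld--Jimbo case and why $\Phi_{KZ}\neq 1$ is an obstruction rather than a harmless non-strictness: the nontrivial associator cannot be transported away, so your reduction to the classical Hodgkin computation collapses. (Your first step --- that torsion-freeness is an invariant of the representation category up to monoidal equivalence --- is true but is not what rescues the argument, since the needed equivalence does not exist.)

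For contrast, the paper's proof is not categorical at all. It translates a coaction of $C(G_q)$ on a finite-dimensional $C^*$-algebra $A=\oplus M_{n_j}(\C)$ into a $*$-action of $\mathcal{U}_q(\mathfrak{g})$ on $A$ and shows directly that this action lifts to a $*$-representation on $\oplus\C^{n_j}$: the torus action is implemented by a positive invertible $k$, the generators $E_i,F_i$ act as twisted derivations which are Hochschild cocycles valued in a bimodule, vanishing of Hochschild cohomology for semisimple algebras produces implementing elements $e_i,f_i\in A$, and these are then corrected by central elements so as to satisfy the defining relations \eqref{treett}--\eqref{tretva}, the $*$-relations \eqref{trefem}, and finally the quantum Serre relations \eqref{tretre}--\eqref{trefyr}, the latter via a centrality-plus-weight argument showing the Serre elements $x_{ij}$, $y_{ij}$ vanish. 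Simple connectedness enters in the classical case $q=1$ (projective representations lift) and implicitly through the weight lattice; the role of \cite{voigtfree} is only to supply the rank-one computation that the paper reproduces, not a categorical building block. If you want to salvage a categorical proof you would need an equivalence of $\mathrm{Rep}(C(G_q))$ with the representation category of some quantum group already known to be torsion-free (as in \cite{voigtfree}, where $SU_q(2)$ is related to free orthogonal quantum groups), not with $\mathrm{Rep}(G)$ itself.
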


\begin{proof}
To prove that $\hat{G}_q$ is torsion-free we observe that a coaction of $C(G_q)$ on a finite-dimensional $C^*$-algebra $A=\oplus M_{n_j}(\C)$ corresponds to a $*$-action of $\mathcal{U}_q(\mathfrak{g})$ on $A$.  What needs to be proven is that this action is induced by a $*$-representation on $\oplus \C^{n_j}$ in the sense that the $*$-representation of $\mathcal{U}_q(\mathfrak{g})$ on $\oplus \C^{n_j}$ satisfies
\[(x.a)v=\Delta_q(x).(a\otimes v)\]
for $a\in \mathcal{U}_q(\mathfrak{g})$, $a\in A$ and $v\in \oplus \C^{n_j}$. The case $q=1$ follows from that $G$ is simply connected and every projective representation of a simply connected Lie group is a representation. So we can assume that $q\neq 1$. The starting point of our proof is the simplest case $\mathfrak{g}=su(2)$. We let the generators of $\mathcal{U}_q(su(2))$ be denoted by $E,F,K$. 

Assume that the finite-dimensional $C^*$-algebra $A$ has a $*$-action of $\mathcal{U}_q(su(2))$. Consider the restriction of the action to the torus $\T\subseteq SU_q(2)$. Since $\T$ is a connected group the $\T$-action is implemented by conjugating by an invertible self-adjoint matrix $k=\oplus k_j$. We may assume that $k$ only has positive eigenvalues. The action of $\mathcal{U}_q(su(2))$ on $A$ is a bi-algebra action so the two other generators $E$ and $F$ satisfy 
\[E(ab)=E(a)K(b)+aE(b) \quad\mbox{and}\quad F(ab)=F(a)b+K^{-1}(a)F(b)\quad \mbox{for}\;\;a,b\in A.\]
Let $M:=A$ as a left $A$-module and equip $M$ with a right $A$-action by 
\[a.b:=akbk^{-1}=aK(b).\] 
We can view $a\mapsto E(a)$ as Hochschild cocycle $E:A\to M$. Since $A$ is a semisimple algebra the Hochschild cohomology vanishes and there exist an $e\in A$ such that 
\begin{equation}
\label{eett}
E(a)=eak^{-1}-aek^{-1}.
\end{equation}
Similarly we can construct an $f\in A$ such that 
\begin{equation}
\label{feett} 
F(a)=fa-k^{-1}akf.
\end{equation}
Using the relation $KEK^{-1}=q^2E$ it follows that 
\[k(ek^{-1}a-k^{-1}akek^{-1})k^{-1}=kek^{-1}ak^{-1}-akek^{-2}=q^2(ea-ae)k^{-1}.\]
This implies that $c:=kek^{-1}-q^2e$ is central. Replacing $e$ by $e-c(1-q^2)^{-1}$ we obtain the identity $kek^{-1}=q^2e$. This operation clearly preserves equation \eqref{eett}. In the same manner we can redefine 
$f$ so that it satisfies $kfk^{-1}=q^{-2}f$ and equation \eqref{feett}.

To prove the commutation relation $[E,F]=\frac{K-K^{-1}}{q-q^{-1}}$ we will assume that $q\in ]0,1[$. This is no restriction since in Proposition $6$ in Chapter $3$ of \cite{klimsch} an isomorphism of Hopf algebras $\mathcal{U}_q(su(2))\cong\mathcal{U}_{q^{-1}}(su(2))$, leaving the generator $K$ invariant, is constructed. The relation $[E,F]=\frac{K-K^{-1}}{q-q^{-1}}$ in $\mathcal{U}_q(su(2))$ implies that for $a\in A$ 
\begin{align*}
\big(e(fa-k^{-1}akf)k^{-1}&-(fa-k^{-1}akf)ek^{-1}\big)-\\
&-\left(f(eak^{-1}-aek^{-1})-k^{-1}(eak^{-1}-aek^{-1})kf\right)=\\
&=(ef-fe)ak^{-1}-k^{-1}a(ef-fe)=\frac{kak^{-1}-k^{-1}ak}{q-q^{-1}}.
\end{align*}
So there exist a central element $c'$ such that $ef-fe-\frac{k}{q-q^{-1}}=c'k^{-1}$. Since $k$ only has positive eigenvalues, $c'$ must have strictly negative eigenvalues and $\lambda:=\sqrt{c'(q-q^{-1})}$ is a well defined self-adjoint matrix because $q\in ]0,1[$.  After replacing $k$ by $\lambda k$ and $e$ by $\lambda e$ we obtain the relation 
\[ef-fe=\frac{k-k^{-1}}{q-q^{-1}}.\] 
So the $\mathcal{U}_q(su(2))$-action on $A$ lifts to an action on $\oplus \C^{n_j}$. To show that this is a $*$-representation, we observe that since the $\mathcal{U}_q(su(2))$-action is a $*$-action we obtain the relation $(E(a))^*=S(E)^*(a)=-F(a^*)$. Thus for any $a\in A$ we have the equality $a^*(e^*-kf)=(e^*-kf)a^*$. So there exist a central $c''$ such that $e^*-kf=c''$. Conjugating this expression by $k$ we obtain 
\[c''=k(e^*-kf)k^{-1}=q^{-2}e^*-q^{-2}kf=q^{-2}c''.\]
Therefore $c''=0$ and $e^*=kf$.  Thus the $\mathcal{U}_q(su(2))$-action on $\oplus \C^{n_j}$ is a $*$-representation. 

Returning to the general case, assume that we have a $*$-action of $\mathcal{U}_q(\mathfrak{g})$ on $A$. We may as above construct $e_1,\ldots, e_n, f_1, \ldots, f_n, k_1,\ldots, k_n\in A$ satisfying the relations \eqref{treett} and \eqref{tretva} defining the $\mathcal{U}_q(\mathfrak{g})$-action on $A$ as in the equations \eqref{eett} and \eqref{feett}. What remains to be proven is that we can choose the elements $e_1,\ldots, e_n, f_1, \ldots, f_n$ such that the quantum Serre relations \eqref{tretre} and \eqref{trefyr} are satisfied. The Serre elements in $A$ is defined as
\begin{align*}
x_{ij}:=\sum_{k=0}^{1-a_{ij}} (-1)^k
\begin{pmatrix} 1-a_{ij}\\
k \end{pmatrix} _{q_i} &e_i^{1-a_{ij}-k}e_je_i^k
\quad \mbox{and}\\
&y_{ij}:=\sum_{k=0}^{1-a_{ij}} (-1)^k
\begin{pmatrix} 1-a_{ij}\\
k \end{pmatrix} _{q_i} f_i^{1-a_{ij}-k}f_jf_i^k,
\end{align*}
whenever they make sense. If we can prove that $x_{ij}=y_{ij}=0$ the proof of the theorem is complete. We will only prove the identity $x_{ij}=0$, the proof of $y_{ij}=0$ is analogous. A straightforward, but cumbersome, calculation gives the identity
\[\left(\sum_{k=0}^{1-a_{ij}} (-1)^k
\begin{pmatrix} 1-a_{ij}\\
k \end{pmatrix} _{q_i} E_i^{1-a_{ij}-k}E_jE_i^k\right)(a)=[x_{ij},a]k_i^{-(1-a_{ij})}k_j^{-1}\quad \mbox{for} \quad a\in A.\]
Since the left hand side is $0$ the element $x_{ij}$ must be central. However, we have the identities $k_ix_{ij}k_i^{-1}=q_i^{2-a_{ij}}x_{ij}$ and $k_jx_{ij}k_j^{-1}=q_j^{a_{ji}(1-a_{ij})+2}x_{ij}$, so if $a_{ij}\neq -2$ the first identity implies that $x_{ij}=0$ and if $a_{ij}=-2$ the second identity implies $x_{ij}=0$. 
\end{proof}

\newpage

\end{document}